\theoremstyle{plain}
\newtheorem{example}{Example}
\newtheorem{thm}{Theorem}
\newtheorem*{thmA}{Theorem A}
\newtheorem{remark}{Remark}
\newtheorem{lemma}{Lemma}
\newtheorem{prop}{Proposition}
\title[Julia limiting directions]{Large scaled geometry of Julia sets of entire and meromorphic functions}
\begin{document}

\author{Jun Wang}
\address{School of Mathematical Sciences, Fudan University,
Shanghai 200433, P.~R.\ China}
\email{majwang@fudan.edu.cn}
\author{Xiao Yao}
\address{Shanghai Center of Mathematical Sciences, Fudan University, Shanghai 200433, P.~R.\ China}
\email{yaoxiao@fudan.edu.cn}

\subjclass[2010]{37F10, 30D35}
\keywords{Julia set, meromorphic function, limiting direction, transcendental direction}
\date{}
\begin{abstract}
  In this paper, we study  the large scaled geometric structure of  Julia sets of entire and meromorphic functions.  Roughly speaking, the structure gives us some asymptotic information about the Julia set near the essential singularity. We will show that one part of this structure is determined by the transcendental directions coming from function theoretic point of view.
\end{abstract}
\maketitle
\section{Introduction and Main Results}
 Let $f$ be a transcendental entire or meromorphic in the complex plane $\mathbb{C}$, and in this paper, the meromorphic function has at least one pole. The Fatou set
$\mathcal{F}(f)$ is the set of points $z$ such that the iterates
$f^n(z)(n=1,2,\cdots)$ of $f$ are well defined and $\{f^n\}$ forms a normal family in some neighborhood of $z$, and the Julia set $\mathcal{J}(f)$ is its complement. The iteration on the Julia set is usually quite complicated,  indeed for $z\in \mathcal{J}(f)$ and $U$ is any neighborhood of $z$, then by Montel's Theorem, $\bigcup f^n(U)$ contains all points in $\mathbb{C}$ with at most one exception. Some basic knowledge  and recent progress in transcendental iteration theory could be found in the survey papers \cite{EL-Survey,ber1,sch, Eremenko} and reference therein. \vskip 2mm
\par We know that from a local point of view, the Julia set has a delicate and self similar  structure in some sense. Now let's imagine ourselves to stand at the north pole of Riemann sphere, that is $\infty$, the essentially singularity
for transcendental entire and meromorphic functions. A natural question is what is the behavior or geometric property of Julia set
near $\infty$. To get a rough impression, we would ignore the local structure, and depict the Julia set in a large scale. In order to do so,
we use the value distribution theory as an important tool. The usual notations and basic results of this theory can be found in \cite{gol1,hay1}. For example, $T(r, f)$ and $N(r, f)$ denote the Nevanlinna characteristic
function and the integrated counting function of poles with respect to $f$, respectively.\vskip 2mm
\par There are already some references in discussing the structure of Julia set around $\infty$. Baker \cite{bak1} proved that $\mathcal{J}(f)$ can not be contained in any finite union of straight lines if $f$ is a transcendental entire function. While it fails for transcendental meromorphic functions, see tangent map as an example since $\mathcal{J}(\tan z)=\mathbb{R}$.
From the viewpoint of angular distribution, Qiao \cite{qiao2} introduced the limiting direction of $\mathcal{J}(f)$. For the brevity, the limiting direction of Julia set is called Julia limiting direction in this paper. A value $\theta\in[0,2\pi)$ is said to be a Julia limiting direction if there is an unbounded sequence $\{z_n\}\subset\mathcal{J}(f)$ such that
\[\lim_{n\rightarrow\infty}\arg z_n=\theta.\]
\par We use $L(f)$ to denote the set of all Julia limiting directions of $f$. Clearly, if $f$ is transcendental, $L(f)$ is non-empty and closed in $[0, 2\pi)$, and reveals the structure of large scaled geometry of Julia set. Note that $\theta\in[0,2\pi)$ can be seen as the argument of one ray originating from $0$, we will identify the two endpoints of $[0, 2\pi)$ to make it as a compact set throughout the  paper. \vskip 2mm
\par For transcendental entire functions, Qiao \cite{qiao2} noticed that there is a relation between Lebesgue measure $meas(L(f))$ and the growth order of $f$, where the order $\rho(f)$ and the lower order $\mu(f)$ are defined respectively as
$$\rho(f)=\limsup_{r\to\infty}\frac{\log^+ T(r,f)}{\log r},\quad \mu(f)=\liminf_{r\to\infty}\frac{\log^+ T(r,f)}{\log r},$$
where $\log^{+}x=\max\{\log x, 0\}$ for any $x>0$. In fact, Qiao proved the following theorem, and remarked that the below estimate is sharp by modifying the function in Mittag-Leffler class.
 \begin{thmA}\cite{qiao2}
 Let $f$ be a transcendental entire function of lower order $\mu<\infty$. Then there exists a closed interval
 $I\subseteq L(f)$ such that
 \[\label{Qiao-lower-bound}
 meas(I)\geq \min\{2\pi,\pi/\mu\}.
 \]
\end{thmA}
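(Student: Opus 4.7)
The plan is to argue by contradiction, assuming that every connected component of $L(f)$ (viewed as a closed subset of the identified circle $[0,2\pi)/\!\sim$) has arc length strictly less than $\pi/\mu$, and then derive a contradiction with the lower-order hypothesis via a Phragm\'en--Lindel\"of / cos-$\pi\mu$ estimate on Fatou sectors.

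\emph{Structural step.} Since $L(f)$ is closed, the complement $U=[0,2\pi)\setminus L(f)$ decomposes as an at most countable disjoint union of open arcs $(\alpha_k,\beta_k)$ with openings $\theta_k=\beta_k-\alpha_k$. Unwinding the definition of limiting direction, for each $k$ and each $\varepsilon>0$ there exists $R_k(\varepsilon)>0$ such that the truncated sector
\[
\Omega_k(\varepsilon)=\{z\in\mathbb{C}:\alpha_k+\varepsilon<\arg z<\beta_k-\varepsilon,\ |z|>R_k(\varepsilon)\}
\]
is disjoint from $\mathcal J(f)$; hence $\Omega_k(\varepsilon)\subset\mathcal F(f)$ and $\{f^n\}$ is a normal family on $\Omega_k(\varepsilon)$.

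\emph{Easy range $\mu\leq 1/2$.} In this regime, $\pi/\mu\geq 2\pi$, so the claim reduces to $L(f)=[0,2\pi)$. Wiman's theorem (the lower-order version of the $\cos\pi\mu$ estimate) yields a sequence $r_n\to\infty$ along which $\log\min_{|z|=r_n}|f(z)|$ is asymptotic to $\log M(r_n,f)\to\infty$. Montel-type expansion of the Julia set then forces unbounded sequences of Julia points in every angular sector, i.e., $L(f)=[0,2\pi)$.

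\emph{Main case $\mu>1/2$.} Here I would want to show that if every component of $L(f)$ has length $<\pi/\mu$, then by a pigeonhole/combinatorial argument on the circle there is a complementary arc $(\alpha_k,\beta_k)\subset U$ of opening $\theta_k$ large enough that on $\Omega_k(\varepsilon)$ one may invoke Phragm\'en--Lindel\"of in the wedge. Since Julia points accumulate on both bounding rays $\arg z=\alpha_k,\beta_k$, the values of $|f|$ on these rays are linked to the global growth of $f$; combining this with the sector estimate, one obtains a bound on the indicator $h_f(\theta)$ inside $(\alpha_k,\beta_k)$ of the form $h_f(\theta)\lesssim r^{\pi/\theta_k}$, which together with the telescoping $\sum\theta_k+\sum(\text{component lengths of }L(f))=2\pi$ must clash with $\mu=\mu(f)$.

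\emph{Main obstacle.} The genuinely hard step is the Phragm\'en--Lindel\"of estimate on a Fatou sector: normality of the iterate family $\{f^n\}$ does not by itself constrain the growth of $f$, and inside a Baker domain $f^n\to\infty$ is compatible with $f$ itself having large growth. The argument must either (i) exploit the fact that Julia points accumulate on the bounding rays of each $\Omega_k$ to pin down $|f|$ there, or (ii) invoke a Tsuji/Carleman-type inequality $\sum_k \pi/\theta_k\leq 2\mu$ applied to the full collection of Fatou sectors simultaneously, using the lower-order analogue of the Denjoy--Carleman--Ahlfors theorem. Making either route quantitative, and avoiding circularity between the angular structure of $L(f)$ and the analytic growth of $f$, is the crux of the proof.
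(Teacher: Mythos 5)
Your outline is not a proof: the step you yourself flag as the ``main obstacle'' is precisely the missing ingredient, and without it both your easy range and your main case collapse. Knowing that a truncated sector $\Omega_k(\varepsilon)$ lies in $\mathcal F(f)$ gives normality of $\{f^n\}$ there, but by itself it gives no bound whatsoever on $|f|$ in that sector, so you have no polynomial boundary data with which to run Phragm\'en--Lindel\"of, no way to bound the indicator $h_f$ inside the complementary arcs, and no way to feed a Tsuji/Carleman inequality $\sum_k \pi/\theta_k\leq 2\mu$ (your route (ii) needs exactly the same growth information you lack). The ingredient that closes this gap is the hyperbolic-image lemma that this paper records as Lemma~1 (taken from Qiao's proof): if $f$ is analytic in a sector of opening $2\delta$ and $f$ maps it into a simply connected hyperbolic domain, then $|f(z)|=O(|z|^{\pi/\delta})$ in any smaller sector. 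To apply it one must also verify that the image Fatou component really omits a suitable continuum; in the paper's Proposition~\ref{prop-inclusion} this is done via a dichotomy: either every component of $\mathcal J(f)$ is bounded, in which case Baker wandering domains exist and $L(f)=[0,2\pi)$ outright, or there is an unbounded Julia component that the image component must avoid. Your sentence ``Julia points accumulate on the bounding rays, so $|f|$ on these rays is linked to the global growth'' has the logic reversed: accumulation of Julia points gives no pointwise control of $|f|$; it is the Fatou sectors, not the Julia rays, on which one proves polynomial growth, and the Phragm\'en--Lindel\"of (or sequence-of-circles) argument is then run on the small ``Julia'' sectors between them using $\mu<\infty$. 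The same defect infects your $\mu\leq 1/2$ case: passing from large minimum modulus on circles $|z|=r_n$ to ``Julia points in every sector'' is exactly the inclusion $\mathcal{TD}(f)\subseteq L(f)$, which again needs Lemma~1, not a vague ``Montel-type expansion.''

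For comparison: the present paper does not reprove Theorem~A (it cites Qiao); its own closest argument is Theorem~\ref{theorem-modification}, proved by a genuinely different mechanism --- Baernstein's spread relation (Lemma~2) lower-bounds $meas(\mathcal{TD}(f))$, and Proposition~\ref{prop-inclusion} transfers this to $L(f)$; for entire $f$ one has $\delta(\infty,f)=1$ and the bound becomes $\pi/\mu$. Note that this route only yields the measure statement, not the existence of a single closed interval $I\subseteq L(f)$; your contradiction-by-components scheme is the natural way to get the interval version (and is essentially Qiao's original strategy), but as written it is a plan with its central analytic step left open rather than an argument.
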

 Moreover, there exists the entire function of infinite order growth, such that $L(f)$ consists of only one limiting direction \cite{bak1}. Later, Theorem A was generalized to  meromorphic functions under certain condtions, see \cite{zheng-wang-huang, qiu-wu} for the details.\vskip 2mm
\par In this paper, we mainly study large scaled geometric structure of Julia set from two aspects. One aspect is to solve the inverse problem of Julia limiting directions, that is, to construct entire or meromorphic functions with a preassigned set
of Julia limiting directions. The other respect is to find an important subset of $L(f)$ which often can be easily determined and which is stable under small perturbation. In fact, these two respects are closely related, since we will use this subsect in the study of the inverse problem. \vskip 2mm
\par The following two theorems enable us
to partially answer the inverse problem.  \vskip 0.5mm
\begin{thm}\label{theorem-infinite-order}
Suppose that $\mathcal{E}$ is a compact subset of $[0,2\pi)$, and that $\rho\in[0,\infty]$. Then there is a transcendental entire function $f$, of infinite lower order, and a transcendental meromorphic function $g$, of order $\rho$, such that
$$L(f)=L(g)=\mathcal{E}.$$
\end{thm}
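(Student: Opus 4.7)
The plan is to construct $f$ and $g$ by placing the ``active'' part of each function (large modulus or a pole sequence) on rays whose arguments lie in $\mathcal{E}$, while forcing $|f|\le 1$ (respectively $g$ bounded) on every sub-sector of $[0,2\pi)\setminus\mathcal{E}$. Since $\mathcal{E}$ is compact, $[0,2\pi)\setminus\mathcal{E}$ is a countable disjoint union of open arcs; on any closed sub-arc, a uniform bound of $f$ on a truncated sector $\{re^{i\theta}:r>R,\ \theta\in[\alpha,\beta]\}$ together with Montel's theorem forces the sector to lie in $\mathcal{F}(f)$, so such $\theta$ are excluded from $L(f)$.

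First I would fix a nested sequence of finite $1/k$-nets $\Theta_k=\{\theta_{k,1},\dots,\theta_{k,N_k}\}\subset\mathcal{E}$, with $\Theta_k\subset\Theta_{k+1}$ and $\bigcup_k\Theta_k$ dense in $\mathcal{E}$. Then I would build $f$ as an entire function having a narrow logarithmic tract $T_{k,j}\subset\{|f|>M_k\}$ approximating each ray $\arg z=\theta_{k,j}$, with opening $\delta_k\to 0$ and peak modulus $\exp(|z|^{q_k})$ inside the tract, where $q_k\to\infty$. A Mittag-Leffler / Lindel\"of type product, or an Arakelyan-type approximation of the target indicator function supported on these tracts, realizes such an $f$ while keeping $|f|\le 1$ off the union of tracts; the escalating exponents $q_k$ force $\mu(f)=\infty$. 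For $g$, I fix some $\theta_0\in\mathcal{E}$, place poles along $\arg z=\theta_0$ with counting function of order exactly $\rho$, form the associated canonical product, and multiply by an entire factor of order $\le\rho$ constructed as above, so that $L(g)$ captures all of $\mathcal{E}$ without exceeding order $\rho$.

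For $\mathcal{E}\subset L(f)$, each tract $T_{k,j}$ contains escaping points (by a direct Wiman--Valiron / Denjoy--Carleman--Ahlfors argument on the tract, or via Eremenko's theorem for class $\mathcal{B}$), and escaping points lie in the Julia set; passing $k\to\infty$ and using density of $\bigcup_k\Theta_k$, every $\theta\in\mathcal{E}$ admits an escaping Julia sequence in direction $\theta$. For $g$, the prescribed poles deliver $\theta_0$ directly and the entire factor delivers the remaining directions. The reverse inclusion uses $|f|\le 1$ on sub-sectors of $\mathcal{E}^c$: iterates remain in the unit disk and hence form a normal family.

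The main obstacle is quantitative calibration. The tract openings $\delta_k$ must shrink rapidly enough that $|f|\le 1$ survives off the tracts, which requires a Phragm\'en--Lindel\"of balance between peak height $\exp(|z|^{q_k})$, tract width $\delta_k$, and radial position $r$. At the same time the peaks must be tall enough that some sequence $r_n\to\infty$ has $T(r_n,f)/\log r_n\to\infty$ for the lower order to be infinite. For $g$, the most delicate case is $\rho=0$, where the entire factor must itself be of order zero while still supporting logarithmic tracts in every direction of $\mathcal{E}$; here one uses a slowly growing Lindel\"of indicator (for example $\log|g(re^{i\theta})|\asymp(\log r)^{\alpha(\theta)}$ with $\alpha(\theta)$ concentrated near $\mathcal{E}$) to make zero-order growth compatible with the required tract structure.
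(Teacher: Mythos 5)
There is a genuine gap, and it is concentrated in the meromorphic half of your argument. You propose to put all poles on a single ray $\arg z=\theta_0$ and to let ``an entire factor of order $\le\rho$ constructed as above'' supply the remaining directions of $\mathcal{E}$. For small or even just finite $\rho$ this entire factor cannot exist for a general compact $\mathcal{E}$: a nonconstant entire function of order $\le 1/2$ has unbounded minimum modulus along a sequence $r_n\to\infty$ (the $\cos\pi\rho$/Wiman theorem), so it cannot be bounded on any sector, and indeed $L=[0,2\pi)$ for all entire functions of order $\le 1/2$, as the paper itself records; for finite $\rho>1/2$ the $\rho$-trigonometric convexity of the indicator forbids confining the growth to, say, a single direction, a finite set, or a compact set of small measure (this is exactly why Theorem 2 needs intervals of length $\ge\pi/\rho$). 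The same objection kills your $\rho=0$ fix via $\log|g(re^{i\theta})|\asymp(\log r)^{\alpha(\theta)}$. The order information has to be carried by the poles themselves, distributed over \emph{all} directions of $\mathcal{E}$: the paper takes a sequence $\hat\theta_k$ running through a countable dense subset of $\mathcal{E}$ with each value repeated infinitely often, sets $a_k=r_ke^{i\hat\theta_k}$ with radii $r_k$ and multiplicities $m_k$ tuned so that $g_\lambda=\lambda\sum_k(z-a_k)^{-m_k}$ has order exactly $\rho\in[0,\infty]$, and observes that $g_\lambda$ is uniformly small off the disks $D(a_k,2)$, which gives $\mathcal{J}(g_\lambda)\subseteq\bigcup_k D(a_k,2)$ and hence both inclusions $L(g_\lambda)=\mathcal{E}$.

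For the entire function of infinite lower order your plan is closer in spirit to the paper, but two steps are not secured. First, the existence of an entire function with infinitely many narrow tracts along the rays of a net in $\mathcal{E}$, escalating growth exponents $q_k\to\infty$, and $|f|\le 1$ off the tracts is precisely the nontrivial content; invoking ``a Mittag-Leffler/Lindel\"of type product or Arakelyan-type approximation'' is the heavy-machinery route the paper explicitly sets out to avoid, and it would still need detailed verification (convergence, global bound off the tracts, and the dynamics). The paper instead sums rotated, rapidly rescaled copies of the explicit Hayman--Fuchs function $E_0$ (growing like $\exp(e^z+z)$ in a strip, $O(z^{-2})$ outside), which makes all estimates elementary. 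Second, your route into the Julia set via ``each tract contains escaping points, and escaping points lie in the Julia set'' needs $f\in\mathcal{B}$ (Eremenko--Lyubich), which you cannot check for an unconstructed $f$, and in addition you need escaping (or at least Julia) points \emph{inside each tract} with arguments tending to the prescribed direction -- a point $z$ with $|f(z)|$ huge may land outside every tract, where $|f|\le1$, so orbits need not escape. The paper's mechanism is simpler and sufficient: fast growth along a sequence in the direction makes it a transcendental direction, and $\mathcal{TD}(f)\subseteq L(f)$ by Qiao's angular-growth lemma (Proposition 2 of the paper). Finally, a small normalization is missing on the excluded directions: $|f|\le1$ on the sectors only sends them into the unit disk; to conclude normality you also need $f(D(0,1))$ to stay in a fixed compact subset of the disk, which the paper arranges by the scaling constants in the sum.
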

\vskip 0.5mm
\par We know $L(f)=[0, 2\pi)$ for transcendental entire $f$ of order $\rho(f)\in[0,1/2]$. Thus, for the above inverse problem on entire functions of finite lower order, we only need to consider entire functions with order more than $1/2$. We can not solve it completely, but the strategy in our proof of Theorem \ref{theorem-infinite-order} can be used to deal with some partial case, and the corresponding result
is stated below.

\begin{thm}\label{theorem-finite-order}
Suppose that $\rho\in (1/2,\infty)$, positive integer $m\leq 2\rho$, and that all $I_i(i=1,\cdots,m)$ are finitely many disjoint closed intervals with $meas(I_i)\geq \pi/\rho$. Then there always exists an entire function $f$ of order $\rho$ such that
  $$L(f)=\bigcup_{i=1}^{m} I_i.$$
\end{thm}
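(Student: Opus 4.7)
I would build $f$ as a canonical product whose zeros are distributed along rays lying inside $\bigcup_{i=1}^m I_i$, arranging the distribution so that $f$ is of order exactly $\rho$, has completely regular growth in Levin's sense, and has indicator $h_f$ that is nonpositive on $\bigcup_{i=1}^m I_i$ and strictly positive on each complementary gap. Once these growth features are in place, the equality $L(f)=\bigcup_{i=1}^m I_i$ will be extracted by pushing the indicator information through standard Fatou-set normality criteria together with a density argument in the Julia set.

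Concretely, write $I_i=[\alpha_i,\beta_i]$ cyclically and set $J_i=(\beta_i,\alpha_{i+1})$. For each $i$, choose angles $\theta_{i,n}\in I_i$ equidistributed in $I_i$, put $a_{i,n}=n^{1/\rho}\,e^{i\theta_{i,n}}$, and define
\[
f(z)=\prod_{i=1}^m\prod_{n=1}^\infty E_q\!\left(\frac{z}{a_{i,n}}\right),
\]
where $q=\lfloor\rho\rfloor$ is the Weierstrass genus. By Lindel\"of's theorem the product converges to an entire function of order exactly $\rho$. By Levin's theory of completely regular growth, $h_f(\theta)$ is the $\rho$-trigonometrically convex minorant of the angular density of zeros; the hypothesis $\mathrm{meas}(I_i)\ge\pi/\rho$ together with the absence of zeros in the gap directions forces $h_f(\theta)\ge c_0>0$ on each gap $J_i$, while the presence of zeros of density $\sim r^\rho$ in the $I_i$ sectors keeps $h_f\le 0$ there.

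The inclusion $L(f)\subseteq\bigcup_{i=1}^m I_i$ would follow from a normality argument: on any closed subsector of a gap $J_i$, completely regular growth yields $|f(re^{i\theta})|\ge\exp(\tfrac{1}{2}c_0\,r^{\rho})$ for all large $r$, so the iterates $\{f^n\}$ escape uniformly to $\infty$ outside a bounded region of the subsector and hence form a normal family there. This places all directions in $\bigcup_i J_i$ in the Fatou set asymptotically, ruling them out as Julia limiting directions.

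The reverse inclusion $\bigcup_{i=1}^m I_i\subseteq L(f)$ is the main obstacle. Theorem~A alone only produces a single interval in $L(f)$ of length $\ge\pi/\rho$, which combined with the previous step is forced to coincide with one of the $I_i$; the difficulty is guaranteeing that \emph{every} $I_i$ is entirely realized. I would attack this in two complementary ways. First, apply a localized version of Theorem~A inside each sectorial neighborhood of $I_i$ bounded by the midrays of the adjacent gaps: on the boundary of such a sector $h_f\ge c_0>0$, so a Phragm\'en--Lindel\"of-type argument should localize Qiao's estimate and deliver an interval of $L(f)$ of measure $\ge\pi/\rho$ inside the neighborhood, which must then equal $I_i$. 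Second, use the backward-orbit density $\overline{\bigcup_n f^{-n}(w_0)}=\mathcal{J}(f)$ for any non-exceptional $w_0$: with $w_0=0$ the iterated preimages include the engineered zeros $\{a_{i,n}\}$, whose arguments are dense in each $I_i$ by construction, so every $\theta\in I_i$ is approached by points in $\mathcal{J}(f)$. The main technical tension is precisely this balancing of the local Qiao/Phragm\'en--Lindel\"of estimate against the directions dictated by the zero set, and I expect this interplay to be where the bulk of the work lies.
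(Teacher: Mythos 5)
Your construction is oriented the wrong way round, and both halves of the dynamical argument break because of it. You make $f$ \emph{large} (indicator $\ge c_0>0$) on the gap sectors and \emph{small} on the sectors over $\bigcup_i I_i$. But for an entire function, directions along which $f$ grows faster than any polynomial on some unbounded sequence are automatically Julia limiting directions (this is exactly Proposition \ref{prop-inclusion}: $\mathcal{TD}(f)\subseteq L(f)$; compare Example \ref{example}, where the half-plane of fast growth of $\lambda e^z$ lies in $L$). So with your indicator profile every gap direction would lie in $\mathcal{TD}(f)\subseteq L(f)$, which is the opposite of the inclusion $L(f)\subseteq\bigcup_i I_i$ you are trying to prove. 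The specific step ``$|f(re^{i\theta})|\ge\exp(\tfrac12 c_0 r^\rho)$ on a gap subsector, so the iterates escape uniformly and form a normal family'' is unjustified: largeness of $|f|$ controls only the first iterate; $f(z)$ may land with argument inside an $I_i$ sector (where, in your design, $f$ is small or near a zero), so nothing forces $f^n\to\infty$ there. Your reverse inclusion also fails: the zeros $a_{i,n}$ are preimages of $0$, but preimages of $0$ lie in $\mathcal{J}(f)$ only if $0\in\mathcal{J}(f)$; for a function that is small on large sectors and maps a disk into itself, $0$ will typically sit in an attracting basin or Baker-type component, and then all the $a_{i,n}$ are Fatou points, so their arguments tell you nothing about $L(f)$. (There is also an internal inconsistency in the growth design: if $|I_i|\ge\pi/\rho$, trigonometric convexity forces $h_f\equiv 0$ on $I_i$, and for completely regular growth this forces zero $\rho$-angular density of zeros in the interior of that sector, contradicting your choice $|a_{i,n}|=n^{1/\rho}$.)

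The workable arrangement is the mirror image of yours, and it is what the paper does: build each block $f_k$ from rotated Mittag--Leffler functions $E_\rho$ so that $f_k$ grows like $\exp(c|z|^{\rho})$ precisely in the directions of $I_k$ and decays like $O(|z|^{-1})$ outside (Proposition \ref{prop-entire-finite-order}; the case $\mathrm{meas}(I_k)\notin(\pi/\rho)\mathbb{N}$ is handled by superposing two rotated copies with overlapping indicators). Then for $f=\sum_k f_k$, the lower inclusion $\bigcup_k I_k=\mathcal{TD}(f)\subseteq L(f)$ is free from Theorem \ref{theorem-modification}, and the upper inclusion follows because $f$ maps $D(0,1)$ into itself and is uniformly small on every sector around a direction outside $\bigcup_k I_k$, so those sectors map into the invariant disk and lie in $\mathcal{F}(f)$ by Montel and invariance. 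No spread/Qiao-type estimate, Phragm\'en--Lindel\"of localization, or backward-orbit density is needed; if you want to salvage your write-up, swap the roles of the $I_i$ and the gaps and replace the canonical-product/CRG machinery by this explicit indicator engineering.
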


\par Theorem \ref{theorem-infinite-order} and Theorem \ref{theorem-finite-order} could be proved by some advanced machineries by the application of  approximation theory in \cite{gol1}, or the techniques of quasi-conformal folding developed in \cite{bishop}. Here, in this paper, we will stress a relatively simple method. Roughly speaking, it should be described as the a kind of soft interpolation technique without  quasiconformal surgery.

Generally, due to the complicated geometry of Julia set, it is difficult to detect possible Julia limiting directions. Our next result shows that $L(f)$ has an important and easily determined subset $\mathcal{TD}(f)$, which is the union of all transcendental directions. Here, a value $\theta\in[0,2\pi)$ is said to be a transcendental direction of $f$ if there exists an unbounded sequence of $\{z_{n}\}$
such that
\[\lim\limits_{n\rightarrow\infty}\arg z_{n}=\theta,\quad \lim\limits_{n\rightarrow\infty}\frac{\log|f(z_n)|}{\log |z_n|}=+\infty.\]
We remark that  the sequence $\{|z_n|\}_{n=1}^{\infty}$ may be very sparse in $\mathbb{R}^+$.
Clearly, $\mathcal{TD}(f)$ is also nonempty and closed for transcendental $f$. Moreover, $\mathcal{TD}(f)=\mathcal{TD}(f+p)$
with any arbitrarily polynomial $p(z)$.\vskip 2mm
\par The following example is given to illustrate the notion of transcendental direction and Julia limiting direction.

\begin{example}\label{example}
Let $E_{\lambda}(z)=\lambda\exp(z)$, where $\lambda\in \mathbb{C}^{*}=\mathbb{C}\backslash \{0\}$. Then
\begin{enumerate}
\item $\mathcal{TD}(E_{\lambda})=[0, \frac{\pi}{2}]\cup[\frac{3}{2}\pi,2\pi)$;\vskip 1mm
\item $L(E_{\lambda})=[0, \frac{\pi}{2}]\cup[\frac{3}{2}\pi,2\pi)$ if $0\notin \mathcal{J}(E_{\lambda})$;\vskip 1mm
\item $L(E_{\lambda})=[0, 2\pi)$ if $0\in \mathcal{J}(E_{\lambda})$.
\end{enumerate}
\end{example}

This example inspired us that there may exist some relation between
$L(f)$ and $\mathcal{TD}(f)$, and indeed, this is our initial motivation for this paper. Our result holds for entire functions, and meromorphic functions with a direct tract. This class of meromorphic functions, were studied deeply in \cite{BRS}, have some similar dynamical behaviors which are very similar to entire functions. The function $f$ is said to have a direct tract if there exists a simply connected and unbounded domain $D$ and $R>0$ such that $f$ is holomorphic in $D$ and continuous on the closure of $D$, $|f(z)|>R$ for any $z\in D$ and $|f(z)|=R$ for $z\in \partial D$.

\begin{thm}\label{theorem-modification}
 Let $f$ be a transcendental entire function, or a transcendental meromorphic function with a direct tract. Then we have
\[\mathcal{TD}(f)\subseteq L(f).\]
Furthermore, if $\mu(f)<\infty$ and
$0<\delta(\infty,f)=1-\limsup_{r\to\infty}\frac{N(r,f)}{T(r,f)},$
 then
\begin{equation}\label{lower-bound-measure}
meas\left(L(f)\right)\geq \min\Big\{2\pi, \frac{4}{\mu(f)}\arcsin\sqrt{\frac{\delta(\infty, f)}{2}}\Big\}.
\end{equation}
\end{thm}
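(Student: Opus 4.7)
The plan is to first establish the inclusion $\mathcal{TD}(f)\subseteq L(f)$, and then to obtain the measure lower bound \eqref{lower-bound-measure} by combining this inclusion with Baernstein's spread relation applied to the deficiency $\delta(\infty,f)$.

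For the inclusion, I would fix $\theta\in\mathcal{TD}(f)$ with a witnessing sequence $\{z_n\}$, so that $\arg z_n\to\theta$, $|z_n|\to\infty$, and $\log|f(z_n)|/\log|z_n|\to\infty$. The aim is to exhibit points of $\mathcal{J}(f)$ whose arguments also converge to $\theta$. Since $|f(z_n)|$ eventually exceeds any prescribed threshold $R$, the point $z_n$ lies for large $n$ in some component of $\{z:|f(z)|>R\}$. In the entire case every unbounded component of such a sublevel set is a direct tract and $|z_n|\to\infty$ forces $z_n$ into an unbounded component; in the meromorphic case with a hypothesized direct tract $D$, either $z_n\in D$ for infinitely many $n$, or $z_n$ sits in a bounded component of $\{|f|>R\}$, which necessarily encloses a pole of $f$. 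Poles lie in $\mathcal{J}(f)$, so in the second alternative $\theta\in L(f)$ is immediate. In the tract alternative I would invoke the direct-tract machinery of \cite{BRS}: passing to a logarithmic coordinate where $f|_D$ becomes a conformal map onto a half-plane, the hypothesis $\log|f(z_n)|/\log|z_n|\to\infty$ places $z_n$ arbitrarily deep inside $D$, so that the forward orbit of $z_n$ stays inside $D$ and escapes doubly exponentially. Such points lie in the fast escaping set $A(f)\subseteq\mathcal{J}(f)$, which completes the inclusion.

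For the measure bound I would apply Baernstein's spread relation to $f$ at $\infty$: under $\mu(f)<\infty$ and $\delta(\infty,f)>0$, one has
\[
\sigma(\infty,f)\;\geq\;\min\Bigl\{2\pi,\;\frac{4}{\mu(f)}\arcsin\sqrt{\delta(\infty,f)/2}\,\Bigr\},
\]
where, after letting an auxiliary $\varepsilon\downarrow 0$, $\sigma(\infty,f)$ is the liminf over $r\to\infty$ of $meas(E_{r,\varepsilon})$ with
$E_{r,\varepsilon}=\{\theta\in[0,2\pi):\log^+|f(re^{i\theta})|>\varepsilon T(r,f)\}$.
Note that when $f$ is entire one has $\delta(\infty,f)=1$ and $\arcsin\sqrt{1/2}=\pi/4$, so the bound collapses to the $\pi/\mu$ of Theorem~A, confirming consistency. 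The next step is to convert spread into transcendental directions. Since $f$ is transcendental, $T(r,f)/\log r\to\infty$; hence any $\theta$ lying in $E_{r_n,\varepsilon}$ along a sequence $r_n\to\infty$ automatically satisfies $\log|f(r_n e^{i\theta})|/\log r_n\to\infty$, and is therefore a transcendental direction via the sequence $z_n=r_n e^{i\theta}$. A standard measure-theoretic inequality $meas(\limsup_n E_{r_n,\varepsilon})\geq\limsup_n meas(E_{r_n,\varepsilon})$ (valid since the total measure is finite) then yields a set of transcendental directions whose measure is at least the right-hand side of the displayed bound. The first step then upgrades this to a bound on $meas(L(f))$, giving \eqref{lower-bound-measure}.

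The principal obstacle is the first step in the meromorphic setting: one must argue that a purely scalar hypothesis on $\log|f(z_n)|/\log|z_n|$ forces $z_n$ to be close to $\mathcal{J}(f)$, even though $\{|f|>R\}$ may possess many components unrelated to the prescribed direct tract $D$. The case split into ``$z_n$ in a direct tract'' and ``$z_n$ in a bounded component surrounding a pole'' resolves this, but only the tract subcase genuinely exploits the escaping-set technology of \cite{BRS}, and verifying that the growth condition really feeds the iterative escape estimate inside the tract is the most delicate point. A secondary subtlety is that the spread relation produces directions along a single common sequence of radii rather than a radius depending on $\theta$; this is handled by the $\limsup$ argument above.
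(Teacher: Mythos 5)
Your second half (the measure bound) is essentially the paper's argument: Baernstein's spread relation with a threshold that is $o(T(r,f))$ but grows faster than $\log r$, a reverse-Fatou/limsup measure argument along the common sequence of radii, and then the inclusion $\mathcal{TD}(f)\subseteq L(f)$ to pass from transcendental directions to Julia limiting directions. That part is sound. The problem is the inclusion itself, which your proposal does not actually prove.

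The central claim of your first step --- that $\log|f(z_n)|/\log|z_n|\to\infty$ places $z_n$ ``deep inside'' a direct tract $D$ so that its forward orbit stays in $D$ and escapes fast, hence $z_n\in A(f)\subseteq\mathcal{J}(f)$ --- fails on several counts. First, knowing that the single image $f(z_n)$ has large modulus gives no control whatsoever over later iterates: $f(z_n)$ need not lie in $D$ or in any tract, and the orbit may well fall into an attracting basin or a Baker domain; the fast-escaping machinery of \cite{BRS} produces \emph{some} fast escaping points inside a direct tract, it does not certify a given point with one large image value. Second, for a \emph{direct} tract the logarithmic change of variable does not make $f$ a conformal map onto a half-plane; that is the definition of a \emph{logarithmic} tract, a strictly stronger hypothesis. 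Third, $A(f)\subseteq\mathcal{J}(f)$ is false in general: one only has $\mathcal{J}(f)=\partial A(f)$, and $A(f)$ meets the Fatou set precisely when $f$ has multiply connected (Baker) wandering domains --- which is exactly the configuration that must be treated separately. Finally, in your meromorphic case split, a bounded component of $\{|f|>R\}$ containing $z_n$ does contain a pole, but that component may have diameter comparable to $|z_n|$, so the pole's argument need not be close to $\theta$ and ``$\theta\in L(f)$ is immediate'' is unjustified.

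The paper proves the inclusion by contradiction rather than by locating $z_n$ in $\mathcal{J}(f)$. If every component of $\mathcal{J}(f)$ is bounded, then by \cite[Theorems 5.1, 5.3]{BRS} $f$ has a Baker wandering domain with the components $U_n$ surrounding $0$ and tending to $\infty$, which forces $L(f)=[0,2\pi)$ and settles that case outright. Otherwise $\mathcal{J}(f)$ has an unbounded component $\mathcal{J}_*$; if $\theta\in\mathcal{TD}(f)\setminus L(f)$, a sector $\Omega(a,\theta,2\epsilon)$ lies in $\mathcal{F}(f)$, the image Fatou component is unbounded (by the growth along $z_n$) and must avoid $T(\mathcal{J}_*)$, hence lies in a simply connected hyperbolic domain, and Qiao's Lemma 1 then gives $|f(z)|=O(|z|^{k})$ in a slightly smaller sector, contradicting the transcendental-direction growth. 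Some argument of this hyperbolic-metric/Phragm\'en--Lindel\"of type, together with the separate treatment of the bounded-components case, is what your proposal is missing.
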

\vskip 0.5mm
Interestingly, even the weak growth behaviour along some unbounded sequence could be closely related to radial distribution of Julia set. Moreover, the lower bound of $meas\left(L(f)\right)$ is given for meromorphic functions with finite lower order, one direct tract and not so many poles. However, we do not know whether $\mathcal{TD}(f)\subseteq L(f)$ holds for all meromorphic functions.\vskip 2mm
\par Finally, there are also other mechanisms to even produce isolated Julia limiting direction for transcendental entire functions $f$ with finite order $\rho$. To illustrate the mechanism, we use an example which was suggested by Prof. Walter Bergweiler in the following theorem.
\begin{thm}\label{theorem-example}
There exists a transcendental entire function of finite order such that its Julia set has an isolated Julia limiting direction. Moreover, we can take
$$f(z)=z-\frac{1-\exp(-z)}{z(z^2+4\pi^2)},\quad \text{then}\quad  L(f)=[\frac{\pi}{2}, \frac{3\pi}{2}]\cup \{0\}.$$
\end{thm}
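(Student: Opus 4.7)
My plan is to establish $L(f) = [\pi/2, 3\pi/2] \cup \{0\}$ in three steps, organized around a two-Baker-domain picture in the right half-plane. First, the three apparent singularities of $g(z) := (1-e^{-z})/(z(z^2+4\pi^2))$ at $z = 0, \pm 2\pi i$ are removable, because $z(z-2\pi i)(z+2\pi i)$ shares its simple zeros with $1 - e^{-z}$, so $f$ is a transcendental entire function of order $1$. For $[\pi/2, 3\pi/2] \subseteq L(f)$ I would compute $\mathcal{TD}(f)$ directly: on a ray $\arg z = \theta \in (\pi/2, 3\pi/2)$, $\text{Re}(z) \to -\infty$ makes $|1 - e^{-z}|$ exponentially large while $|z(z^2+4\pi^2)|$ is polynomial, so $\log|f(z)|/\log|z| \to \infty$; on $\arg z = \theta \in (-\pi/2, \pi/2)$ instead $|g(z)| = O(|z|^{-3})$ and $\log|f(z)|/\log|z| \to 1$. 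Thus $\mathcal{TD}(f) = [\pi/2, 3\pi/2]$, and Theorem \ref{theorem-modification} gives the inclusion.

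For the remaining two inclusions I would exploit the estimate $|f(z) - z| \leq C|z|^{-3}$ in the right half-plane to approximate $f$ by the pseudo-flow $\dot z = -z^{-3}$, whose integral curves are $z(t)^4 = z(0)^4 - 4t$. Starting at $z_0 = r e^{i\theta}$ with $\theta \in (0, \pi/2)$, $w(t) := z(t)^4$ moves leftward in $\mathbb{C}$ with $\text{Im}(w(t)) = \text{Im}(z_0^4)$ held constant; as $w(t) \to -\infty$, the continuous fourth-root branch sends $\arg z(t) \to \pi/4$ along a trajectory that remains in the open upper-right quadrant. Symmetrically, $\theta \in (-\pi/2, 0)$ gives $\arg z(t) \to -\pi/4$ in the lower right, and $\theta = 0$ leads to $w(t) = 0$ at the finite time $t = r^4/4$. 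Promoting this pseudo-flow picture to a statement about $f$ (the error is $O(|z|^{-3})$ plus the exponentially small $O(e^{-\text{Re}(z)})$) produces two Baker-type Fatou components, one absorbing the upper-right quadrant at infinity and one the lower right, with $\mathbb{R}_+$ on their common boundary. This removes every $\theta \in (-\pi/2, 0) \cup (0, \pi/2)$ from $L(f)$, while the unbounded piece of $\mathcal{J}(f)$ on $\mathbb{R}_+$ gives $0 \in L(f)$.

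The main obstacle is making the Baker-component construction rigorous, which requires quantitative normality estimates matching the pseudo-flow with the actual iteration. A closely related delicate point is verifying that the two components do not extend across $\mathbb{R}_+$ at infinity (which would erase $\{0\}$ from $L(f)$). This would be secured by the explicit real-axis dynamics: on $\mathbb{R}_+$, $f(x) < x$ and the iterates satisfy $x_n \sim (x_0^4 - 4n)^{1/4}$, decreasing monotonically and crossing into $\mathbb{R}_-$ at finite iteration time $n \approx x_0^4/4$. Hence the upper and lower Fatou components on the two sides of $\mathbb{R}$ are genuinely separated, and $\mathbb{R}_+$ must remain in $\mathcal{J}(f)$ at arbitrarily large arguments, completing $L(f) = [\pi/2, 3\pi/2] \cup \{0\}$.
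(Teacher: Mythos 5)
Your first two steps (removability and order, and $\mathcal{TD}(f)=[\pi/2,3\pi/2]\subseteq L(f)$ via Theorem \ref{theorem-modification}) agree with the paper, and your pseudo-flow picture $\dot z=-z^{-3}$ correctly identifies the mechanism (parabolic-type petals at $\infty$ with attracting axes $\pm\pi/4$ and repelling axis $0$). But the decisive step is left with a genuine gap: the separation of the upper-right and lower-right Fatou components, i.e.\ keeping $\{0\}$ inside $L(f)$. Your justification is that real orbits satisfy $f(x)<x$ and cross into $\mathbb{R}_-$ after finitely many steps, ``hence'' the two components are separated. This is a non sequitur: if a single invariant Fatou component $U$ contained both quadrant corners and a real interval, the real orbits would still tend to $\infty$ in modulus (they go to $-\infty$ along $\mathbb{R}$), so normality and the Baker-domain property $f^n\to\infty$ are not violated, and nothing in your argument prevents points of one component from escaping in wildly different directions. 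What is needed is a quantitative comparison of orbits of two points inside a Baker domain; the paper uses Baker's estimate (Lemma \ref{lemma-baker-domain}): $|f^n(z_1)|\leq |f^n(z_2)|^{c}$ for $z_1,z_2$ in a compact subset, applied to $z_1=x$, $z_2=f(x)$ for a real $x$ in the putative common domain. The contradiction then comes not from the finite crossing time (which you emphasize) but from the behaviour you never use: once $f^n(x)$ is on the negative axis, $|f^{n+1}(x)|\geq e^{|f^n(x)|/2}$ eventually, which is incompatible with $|f^{n+1}(x)|\leq |f^n(x)|^{c}$. Without this (or an equivalent hyperbolic-metric argument) your ``genuinely separated'' claim, and hence $0\in L(f)$, is unproved. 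Relatedly, the claim ``$\mathbb{R}_+$ must remain in $\mathcal{J}(f)$'' is both unjustified and stronger than needed; the correct deduction from distinctness is that every vertical segment $\{R+iy:|y|\leq L\}$ joins the two components and therefore meets $\mathcal{J}(f)$, giving Julia points with argument tending to $0$.

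On the construction of the two components themselves, you acknowledge that promoting the pseudo-flow to actual Fatou components is the ``main obstacle,'' but you offer no mechanism, and your error bookkeeping ($|f(z)-z|=O(|z|^{-3})$ described as the error) conflates the drift with the error; the true error relative to the Euler step of $\dot z=-z^{-3}$ is $O(|z|^{-5})+O(e^{-\mathrm{Re}\,z}|z|^{-3})$, and a shadowing argument at this level of precision is delicate. The paper avoids this entirely: it proves by direct estimates (its Lemma \ref{lemma-baker-domain-estimation}, splitting $f=\bigl(z-z^{-3}\bigr)+\text{small terms}$) that the corner regions $U_L=\{\mathrm{Re}\,z\geq L,\ \mathrm{Im}\,z\geq L\}$ and $V_L$ are forward invariant, whence they lie in $\mathcal{F}(f)$ by Montel, the containing components are invariant with $f^n\to\infty$ there, and the classification of invariant components forces Baker domains. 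Adopting this invariance route (and adding Baker's lemma plus the exponential escape along $\mathbb{R}_-$ for the separation step) would close your gaps; as written, the proposal establishes $[\pi/2,3\pi/2]\subseteq L(f)$ but neither the exclusion of $(0,\pi/2)\cup(3\pi/2,2\pi)$ nor $0\in L(f)$ is yet proved.
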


For $f(z)$ above, $\theta=0$ is the isolated Julia limiting direction. This example is motivated by the transcendental perturbation of  parabolic petals of the rational function $g(z)=z-\frac{1}{z^3}$ at $\infty$ to get the Baker domain for entire $f$. And the repelling axis for the parabolic petals will become the isolated Julia limiting direction.  This  phenomenon makes it difficult to answer the inverse problem for transcendental entire functions of finite order.\vskip 2mm
\par
 This paper is organised as follows. We obtain some basic properties of transcendental directions, and prove Theorem \ref{theorem-modification} in Section \ref{sect-Basic-Theorem-modification}. The proof of Theorem \ref{theorem-infinite-order}, that is the construction of suitable functions for the inverse problem, is given in Section \ref{sect-inverse-problem}. Theorem \ref{theorem-finite-order} and Theorem \ref{theorem-example}, further discussion on entire functions with finite order, are proved in Section \ref{sect-finite-order-case-entire} and Section \ref{sect-isolated-Julia-limiting-direction}, respectively.

\section{Basic Property of Transcendental Direction}\label{sect-Basic-Theorem-modification}

We first state a result due to Qiao \cite{qiao2}, which is very useful to deal with the case when there is an angular domain in $\mathcal{F}(f)$.
The result can be deduced from the proof of \cite[Lemma 1]{qiao2}.
\begin{lemma}\cite{qiao2}
Let $f$ be an analytic function in the angular domain $$\Omega(z_0,\theta,\delta)=\{z: |\arg (z-z_0)-\theta|<\delta\}.$$ Suppose that $f(\Omega(a,\theta,\delta))$ is contained in a simply connected hyperbolic domain in $\mathbb{C}$. Then
$$|f(z)|=O(|z|)^{\pi/\delta},\quad z\in\Omega(z_0,\theta,\delta')$$
for any $\delta'\in(0,\delta)$.
\end{lemma}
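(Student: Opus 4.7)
The plan is to reduce the problem to a standard Schwarz--Pick plus Koebe distortion estimate by unfolding both the angular domain and the target hyperbolic domain. First I would apply the conformal change of variables $w = \psi(z) := \bigl(e^{-i\theta}(z - z_0)\bigr)^{\pi/(2\delta)}$, which maps $\Omega(z_0, \theta, \delta)$ onto the right half-plane $H = \{\mathrm{Re}\,w > 0\}$ and sends the narrower cone $\Omega(z_0, \theta, \delta')$ into the strictly smaller sector $S_{\delta'} = \{|\arg w| < \pi\delta'/(2\delta)\}$. The closure of $S_{\delta'}$ meets $\partial H$ only at the origin, so there exists $c_0 > 0$ with $\mathrm{Re}\,w \geq c_0 |w|$ on $S_{\delta'}$; also $|z|$ and $|w|^{2\delta/\pi}$ are comparable for large $|z|$.

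Next, since $f(\Omega)$ lies in a simply connected hyperbolic domain $V \subset \mathbb{C}$, the Riemann mapping theorem furnishes a conformal map $\Phi : \mathbb{D} \to V$, and since $\Omega$ is simply connected one obtains a holomorphic lift $g : H \to \mathbb{D}$ satisfying $\Phi \circ g = f \circ \psi^{-1}$. Fix a base point $w_0 \in S_{\delta'}$, set $a_0 = g(w_0)$, and apply the Schwarz--Pick lemma on $H$: the pseudo-hyperbolic distance from $g(w)$ to $a_0$ is controlled by the corresponding distance from $w$ to $w_0$ in $H$, and the direct computation
$$1 - \left|\frac{w - w_0}{w + \overline{w_0}}\right|^2 \;=\; \frac{4\,\mathrm{Re}\,w\,\mathrm{Re}\,w_0}{|w + \overline{w_0}|^2} \;\geq\; \frac{c_1}{|w|}$$
for $w \in S_{\delta'}$ with $|w|$ large, together with a short M\"obius manipulation, upgrades to a Euclidean bound $1 - |g(w)| \geq c_2/|w|$.

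Finally, applying the Koebe distortion theorem to the univalent map $\Phi$ gives $|\Phi(\xi)| \leq |\Phi(0)| + |\Phi'(0)|\,|\xi|/(1 - |\xi|)^2$, whence
$$|f(\psi^{-1}(w))| \;=\; |\Phi(g(w))| \;\leq\; C + \frac{C'}{(1 - |g(w)|)^2} \;=\; O(|w|^2).$$
Translating back via $|w|^{2\delta/\pi} \asymp |z|$ yields $|f(z)| = O(|z|^{\pi/\delta})$ on $\Omega(z_0, \theta, \delta')$. The one delicate step is the passage from the pseudo-hyperbolic estimate to the Euclidean bound on $1 - |g(w)|$, and this is exactly where the retreat from $\delta$ to $\delta' < \delta$ is essential: only inside the strictly interior cone is $\mathrm{Re}\,w$ comparable to $|w|$, which is precisely what keeps $g$ uniformly away from $\partial\mathbb{D}$ at a controlled rate near infinity.
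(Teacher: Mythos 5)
Your argument is correct: the unfolding $w=\bigl(e^{-i\theta}(z-z_0)\bigr)^{\pi/(2\delta)}$, the Schwarz--Pick estimate on the half-plane (the identity $1-\bigl|\tfrac{w-w_0}{w+\overline{w_0}}\bigr|^2=\tfrac{4\,\mathrm{Re}\,w\,\mathrm{Re}\,w_0}{|w+\overline{w_0}|^2}$ is right, and the M\"obius step $1-|g(w)|\geq \tfrac{1}{2}(1-|a_0|)(1-r)$ does give $1-|g(w)|\geq c_2/|w|$ on the strictly smaller sector), and the Koebe growth theorem for the Riemann map $\Phi$ combine to give $O(|w|^2)=O(|z|^{\pi/\delta})$ on $\Omega(z_0,\theta,\delta')$, which is exactly the stated bound; one cosmetic remark is that no lifting is needed, since $\Phi$ is a conformal bijection you may simply set $g=\Phi^{-1}\circ f\circ\psi^{-1}$. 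Note that the paper itself gives no proof of this lemma, only the citation to Qiao's Lemma 1, whose underlying mechanism is the comparison of hyperbolic metrics between the sector and the simply connected image domain; your Schwarz--Pick plus Koebe route is the same mechanism made explicit (Schwarz--Pick is the contraction of the hyperbolic metric, and Koebe supplies the lower bound for the hyperbolic density of a simply connected domain), so you have in effect reconstructed the standard proof, with the correct identification of where the retreat from $\delta$ to $\delta'$ is used.
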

\par
  By Lemma 1, we establish the relation between transcendental directions and Julia limiting directions as follows.

\begin{prop}\label{prop-inclusion}
Let $f$ be a transcendental entire function, or a meromorphic function with direct tract. Then $\mathcal{TD}(f)\subseteq L(f)$.
\end{prop}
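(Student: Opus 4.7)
The strategy is proof by contradiction: I assume $\theta\in\mathcal{TD}(f)\setminus L(f)$ and use Lemma~1 to produce a polynomial growth bound for $f$ along a sector through $\theta$, which clashes with the super-polynomial growth encoded in the definition of a transcendental direction.

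First I would use that $L(f)$ is closed in the compactification of $[0,2\pi)$ to choose $\delta>0$ and $R_0>0$ with
\[
\Omega:=\{z\in\mathbb{C}:|\arg z-\theta|<2\delta,\ |z|>R_0\}\subseteq\mathcal{F}(f).
\]
Being open and connected, $\Omega$ lies in a single Fatou component $U$, and $f(\Omega)\subseteq f(U)$ is a connected subset of $\mathbb{C}$. Next I would verify the geometric hypothesis of Lemma~1: that $f(\Omega)$ is contained in some simply connected hyperbolic domain $V\subseteq\mathbb{C}$. In the entire case, $f$ preserves the Fatou set, so $f(\Omega)\subseteq\mathcal{F}(f)$ avoids the perfect, unbounded set $\mathcal{J}(f)$, and one builds $V$ by slitting $\hat{\mathbb{C}}$ along a Jordan arc from a point of $\mathcal{J}(f)\cap\mathbb{C}$ to $\infty$ that is disjoint from $f(\Omega)$. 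In the meromorphic case with a direct tract $D$, an analogous construction should use the simple connectedness of $D$ together with the threshold level $\{|w|=R\}$ separating $f(D)$ from the rest of the plane.

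Granting the construction of $V$, Lemma~1 applied to $f$ on $\Omega(0,\theta,2\delta)$ yields
\[
|f(z)|=O\!\left(|z|^{\pi/(2\delta)}\right)\qquad\text{on}\qquad\Omega':=\{|\arg z-\theta|<\delta,\ |z|>R_0\},
\]
so $\limsup_{z\to\infty,\,z\in\Omega'}\log|f(z)|/\log|z|\leq\pi/(2\delta)<+\infty$. But the definition of $\theta\in\mathcal{TD}(f)$ furnishes $z_n$ with $\arg z_n\to\theta$, $|z_n|\to\infty$ and $\log|f(z_n)|/\log|z_n|\to+\infty$; for large $n$ one has $z_n\in\Omega'$, contradicting the previous bound.

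The main obstacle I expect is the construction of the simply connected hyperbolic target $V$. The naive choice $V=U'$, the Fatou component containing $f(U)$, need not work, since transcendental entire functions admit multiply connected (Baker wandering) Fatou components. The slit-plane workaround requires routing a Jordan arc in $\hat{\mathbb{C}}$ from a Julia point to $\infty$ disjointly from the connected set $f(\Omega)$, which is delicate when $f(\Omega)$ is itself unbounded in $\mathbb{C}$; here one leverages the connectedness of $f(\Omega)$ and the abundance of Julia points (for example, repelling periodic points) to find admissible routes. In the direct-tract setting there is the further complication that $f(\mathcal{F}(f))\subseteq\mathcal{F}(f)$ may fail in the presence of poles, so one must rely instead on the intrinsic geometry of the tract and its boundary level set to supply the hyperbolic target.
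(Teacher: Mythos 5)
Your overall strategy---assume $\theta\in\mathcal{TD}(f)\setminus L(f)$, place an angle $\Omega\subseteq\mathcal{F}(f)$ around $\theta$, apply Lemma~1, and contradict the super-polynomial growth along $z_n$---is exactly the paper's argument in its main case. But the step you yourself flag as ``the main obstacle'', namely producing a simply connected hyperbolic domain containing $f(\Omega)$, is where the real content lies, and your sketch does not close it. Finding a Jordan arc from a Julia point to $\infty$ disjoint from $f(\Omega)$ is not a matter of ``abundance of Julia points'' (repelling periodic points are dense in any case): the arc exists only if some Julia point lies in the component of $\hat{\mathbb{C}}\setminus f(\Omega)$ containing $\infty$, and when $f(\Omega)$ is unbounded it can happen a priori that every Julia point sits in a bounded complementary component, i.e.\ that all components of $\mathcal{J}(f)$ are bounded. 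Ruling this out requires global information, and the paper supplies it through a dichotomy you are missing. Either $\mathcal{J}(f)$ has an unbounded component $\mathcal{J}_*$: then the (necessarily unbounded) Fatou component containing $f(\Omega)$ avoids the continuum $\mathcal{J}_*\cup\{\infty\}$, whose complementary components in $\hat{\mathbb{C}}$ are simply connected, so Lemma~1 applies directly with no arc construction at all. Or every component of $\mathcal{J}(f)$ is bounded: then Baker's theorem (entire case), respectively Bergweiler--Rippon--Stallard, Theorem~5.3 (direct-tract case), yields a Baker wandering domain, i.e.\ multiply connected Fatou components surrounding $0$ and tending to $\infty$, which forces every ray to meet $\mathcal{J}(f)$ at arbitrarily large modulus, so $L(f)=[0,2\pi)$ and the inclusion is trivial---equivalently, the Fatou angle you start from cannot exist in this case. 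Without handling this second case your argument is incomplete, because it is exactly the scenario in which your proposed slit construction breaks down.

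A telling symptom is that your proof never uses the direct-tract hypothesis: that hypothesis enters precisely in the bounded-components case, via the Bergweiler--Rippon--Stallard theorem just cited. Conversely, your worry that $f(\mathcal{F}(f))\subseteq\mathcal{F}(f)$ ``may fail in the presence of poles'' is unfounded: the Fatou set of a meromorphic function is completely invariant, so $f(\Omega)\subseteq\mathcal{F}(f)$ wherever $f$ is finite. The tract is not a substitute for invariance; its role is to guarantee the Baker wandering domain when all Julia components are bounded.
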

\begin{proof}
  We first treat the case that each component of $\mathcal{J}(f)$ is bounded. Note that every transcendental entire function must have direct tract. Then by \cite[Theorem 5.3]{BRS}, $\mathcal{F}(f)$ has Baker wandering domain $U$, that is, $\{U_n\}_{n=1}^{\infty}$ is the sequence of multiply connected Fatou components surrounding $0$ and $\lim_{n\rightarrow\infty} dist(0, U_n)=\infty$. Here, $U_n$ denotes the component of $\mathcal{F}(f)$ containing $f^n(U)$. For transcendental entire function, this fact is already proved in \cite{bak1} earlier. Moreover, by \cite[Theorem 5.1]{BRS}, for $n$ large enough, $U_n\subseteq T(U_{n+1})$, where we use the notation $T(X)$ to mean the
union of $X$ with its bounded complementary components.
It is easy to see $L(f)=[0, 2\pi)$. Or else, there must exists an angular domain $\Omega(a,\theta,2\epsilon)$ intersecting infinitely many   $U_n$, which is impossible. This implies $\mathcal{TD}(f)\subseteq L(f)$ directly.\vskip 2mm
\par Now, there is an unbounded component $\mathcal{J}_*$ in $\mathcal{J}(f)$. Let $W$ be the connected Fatou component containing  $\Omega(a,\theta,2\epsilon)$, and let $V$ be the Fatou component containing $f(W)$. Given $\theta\in \mathcal{TD}(f)$, we assume that $\theta\not\in L(f)$, otherwise there is nothing to prove.  Then we have $\Omega(a,\theta,2\epsilon)\subseteq \mathcal{F}(f)$ for $\epsilon>0$ and $a$ with $\arg a=\theta$. At the same time, there is a unbounded sequence $\{z_n\}\subseteq \Omega(a,\theta,2\epsilon)$ such that
\begin{equation}\label{eq-transcendental-growth}
\arg(z_{n})\to \theta\quad\text{and}\quad \frac{\log|f(z_n)|}{\log |z_n|}\to +\infty,\quad \text{as}\,\,n\to\infty.
\end{equation}
By (\ref{eq-transcendental-growth}), $V$ must be
unbound, then $V\subseteq \mathbb{C}\setminus T(\mathcal{J}_*)$. Followed by Lemma 1, we know that there exist positive constants $k$ and $A$, such that for $\epsilon'<\epsilon$,
$$|f(z)|\leq A |z|^{k},\quad \text{for}\quad z\in \Omega(a,\theta, 2\epsilon').$$
This contradicts with \eqref{eq-transcendental-growth}, so $\theta\in L(f)$. Hence, we also have $\mathcal{TD}(f)\subseteq L(f)$.
\end{proof}

The above proposition tells us that to measure $L(f)$, the possible way is to estimate $meas(\mathcal{TD}(f))$. Thus, we need to
find the direction on which $f$ grows faster than the polynomials. Recall Baerstein's result on the spread relation \cite{bae1}, it says that for $f$ without too many poles, $\log |f|$ is `enough large' on a substantial portion of circles.
\begin{lemma}\cite{bae1} \ Let $f$ be a transcendental meromorphic function with finite
lower order $\mu$ and positive $\delta(\infty,f)$. Let $\Lambda(r)$ be a positive function such that $\Lambda(r)=o(T(r,f))$ as $r\to\infty$,
and $D_{\Lambda}(r)=\{\theta\in
[0,2\pi): |f(re^{i\theta})|> e^{\Lambda(r)}\}$. Then there exists a positive, increasing and unbounded sequence $\{r_n\}$ such that
we have
\[\liminf\limits_{n\rightarrow\infty}mes(
D_{\Lambda}(r_n))
\ge\min\Big\{2\pi,\frac{4}{\mu}\arcsin
\sqrt{\frac{\delta(\infty,f)}{2}}\Big\}.\]
\end{lemma}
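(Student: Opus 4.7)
My plan is to prove Baernstein's spread relation via his $*$-function technique combined with a P\'olya peak argument.

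First I would introduce Baernstein's $*$-function
$$T^*(re^{i\theta}, f) := \sup_{\substack{E \subset [-\pi,\pi] \\ |E| = 2\theta}} \frac{1}{2\pi} \int_E \log^+\lvert f(re^{i\phi}) \rvert\, d\phi + N(r, f), \qquad 0 \le \theta \le \pi,$$
and prove the key structural theorem that $z = re^{i\theta} \mapsto T^*(z, f)$ is subharmonic on the open half-strip $\{0 < \theta < \pi\}$, continuous on $\{0 \le \theta \le \pi\}$, with boundary values $T^*(r, f) = N(r, f)$ and $T^*(-r, f) = T(r, f)$. This subharmonicity is the principal technical obstacle and rests on a delicate Hardy--Littlewood--P\'olya type rearrangement inequality governing the behaviour of the supremum under circular means.

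Second, I would invoke P\'olya peaks of the first kind for the lower order $\mu$: there exist $r_n \to \infty$, $A_n \to \infty$, $\epsilon_n \to 0$ with $T(t r_n, f) \le (1 + \epsilon_n) t^\mu T(r_n, f)$ for $t \in [A_n^{-1}, A_n]$. On such peaks, the subharmonicity of $T^*$ combined with Phragm\'en--Lindel\"of in the half-strip and comparison with the extremal subharmonic function of order $\mu$ (modelled on $\operatorname{Re}(z^\mu)$) yields an upper estimate
$$T^*(r_n e^{i\theta}, f) \le h_\mu(\theta)\, T(r_n, f) + (1 - h_\mu(\theta))\, N(r_n, f) + o(T(r_n, f)),$$
where $h_\mu : [0, \pi] \to [0, 1]$ is the explicit convex interpolant coming from the extremal problem (rising from $0$ at $\theta = 0$ to $1$ at $\theta = \pi$).

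Finally I would argue by contradiction. Suppose that, after passing to a subsequence of the above P\'olya peaks,
$$\mathrm{meas}\bigl(D_\Lambda(r_n)\bigr) \le 2\theta_0 < \min\bigl(2\pi,\, (4/\mu)\arcsin\sqrt{\delta(\infty, f)/2}\bigr)$$
for all large $n$. Since $\Lambda(r) = o(T(r, f))$, the contribution of $\log^+\lvert f\rvert$ from outside $D_\Lambda(r_n)$ to $m(r_n, f)$ is negligible, so taking $E \supseteq D_\Lambda(r_n)$ of measure $2\theta_0$ in the supremum defining $T^*$ gives the lower bound $T^*(r_n e^{i\theta_0}, f) \ge m(r_n, f) + N(r_n, f) + o(T(r_n, f)) = T(r_n, f) + o(T(r_n, f))$. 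Combining with the upper estimate and dividing by $T(r_n, f)$, the deficiency hypothesis forces $h_\mu(\theta_0) \ge \delta(\infty, f)$; inverting the explicit form of $h_\mu$ yields $2\theta_0 \ge (4/\mu)\arcsin\sqrt{\delta(\infty, f)/2}$, contradicting the choice of $\theta_0$. The $\arcsin$ in the spread bound appears precisely as the inverse of the extremal function $h_\mu$.
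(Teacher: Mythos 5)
The paper itself gives no proof of this lemma: it is quoted verbatim from Baernstein's paper \cite{bae1} on the spread conjecture, so the only meaningful comparison is with Baernstein's original argument, and your outline does follow that route (star function and its subharmonicity, P\'olya peaks at the lower order, an extremal comparison in the half-strip, contradiction with the deficiency). The auxiliary points are essentially fine: with $\log^+|f|$ the boundary values are as you state, the subharmonicity of $T^*$ is the $\delta$-subharmonic form of Baernstein's theorem (the negative Riesz mass of $\log^+|f|$ is still carried by the poles and is compensated by $N(r,f)$), and P\'olya peaks of order $\mu<\infty$ exist by Edrei's theorem.

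There is, however, a genuine error at the quantitative heart of the sketch: the upper estimate cannot have the convex-combination form $T^*(r_ne^{i\theta})\le h_\mu(\theta)T(r_n,f)+(1-h_\mu(\theta))N(r_n,f)+o(T(r_n,f))$, and your final step does not follow from it. Combining that display with your lower bound $T^*(r_ne^{i\theta_0})\ge T(r_n,f)+o(T(r_n,f))$ gives $(1-h_\mu(\theta_0))\,(T(r_n,f)-N(r_n,f))\le o(T(r_n,f))$; since $T-N\ge(\delta(\infty,f)-o(1))T$, this forces $h_\mu(\theta_0)\ge 1-o(1)$, i.e.\ $\theta_0=\pi$ --- not $h_\mu(\theta_0)\ge\delta(\infty,f)$ as you assert. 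In other words, your two estimates together would ``prove'' that the spread is always $2\pi$ whenever $\delta(\infty,f)>0$, which is false ($f=e^z$ has $\delta(\infty,f)=1$, $\mu=1$, and spread exactly $\pi$, matching the bound $\frac{4}{\mu}\arcsin\sqrt{\delta/2}=\pi$). The correct comparison inequality that the subharmonicity of $T^*$, the P\'olya peaks and the Phragm\'en--Lindel\"of argument actually yield is of the form
\[T^*(r_ne^{i\theta})\le N(r_n,f)+\bigl(1-\cos\mu\theta\bigr)T(r_n,f)+o(T(r_n,f)),\qquad 0\le\theta\le\min\{\pi,\pi/\mu\},\]
which is not a convex interpolation between $N$ and $T$. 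With your lower bound this gives $\cos(\mu\theta_0)\le 1-\delta(\infty,f)+o(1)$, i.e.\ $2\sin^2(\mu\theta_0/2)\ge\delta(\infty,f)$, and hence $2\theta_0\ge\frac{4}{\mu}\arcsin\sqrt{\delta(\infty,f)/2}$; the $\arcsin$ comes from inverting $1-\cos\mu\theta$, with the deficiency entering through the cosine term rather than through an equation $h_\mu(\theta_0)=\delta$. So the skeleton is Baernstein's, but the stated extremal bound must be replaced by the cosine inequality for the argument to close.
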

Baerstein's result inspires us to give the lower bound of $meas(\mathcal{TD}(f))$.
\begin{prop}\label{prop-lowner-bound}
Let $f$ be a transcendental entire or meromorphic function with finite lower order $\mu$ and $\delta(\infty,f)>0$,  then
$$meas(\mathcal{TD}(f))\geq \min\Big\{2\pi, \frac{4}{\mu}\arcsin\sqrt{\frac{\delta(\infty, f)}{2}}\Big\}.$$
\end{prop}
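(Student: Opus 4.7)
\medskip
\noindent\textbf{Proof proposal.} The plan is to feed Baerstein's spread lemma (Lemma 2) into the definition of a transcendental direction by a suitable choice of the auxiliary function $\Lambda$, and then to collect the resulting angular information across an unbounded sequence of radii using a $\limsup$ argument.

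\smallskip
First I would choose $\Lambda(r)$ that grows faster than any polynomial but slower than $T(r,f)$. Since $f$ is transcendental, $T(r,f)/\log r\to\infty$, so the concrete choice $\Lambda(r)=\sqrt{T(r,f)\log r}$ satisfies both $\Lambda(r)=o(T(r,f))$ and $\Lambda(r)/\log r\to\infty$ as $r\to\infty$. Lemma 2 then supplies an increasing unbounded sequence $\{r_n\}$ with
\[
\liminf_{n\to\infty}\, meas\bigl(D_\Lambda(r_n)\bigr)\ \ge\ \min\Big\{2\pi,\tfrac{4}{\mu}\arcsin\sqrt{\tfrac{\delta(\infty,f)}{2}}\Big\},
\]
where $D_\Lambda(r_n)=\{\theta\in[0,2\pi):|f(r_n e^{i\theta})|>e^{\Lambda(r_n)}\}$.

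\smallskip
Next I would consider the set
\[
E\ =\ \limsup_{n\to\infty} D_\Lambda(r_n)\ =\ \bigcap_{N\ge 1}\bigcup_{n\ge N} D_\Lambda(r_n)
\]
and show $E\subseteq\mathcal{TD}(f)$. Indeed, if $\theta\in E$, there is a subsequence $\{n_k\}$ with $\theta\in D_\Lambda(r_{n_k})$ for every $k$. Setting $z_k=r_{n_k}e^{i\theta}$ gives an unbounded sequence with $\arg z_k\equiv\theta$ and
\[
\frac{\log|f(z_k)|}{\log|z_k|}\ >\ \frac{\Lambda(r_{n_k})}{\log r_{n_k}}\ \longrightarrow\ \infty,
\]
by the choice of $\Lambda$, so $\theta\in\mathcal{TD}(f)$ by definition.

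\smallskip
Finally, since we work on the finite-measure space $[0,2\pi)$, the reverse Fatou inequality gives $meas(E)\ge\limsup_n meas(D_\Lambda(r_n))\ge\liminf_n meas(D_\Lambda(r_n))$, and combining this with the previous step yields
\[
meas\bigl(\mathcal{TD}(f)\bigr)\ \ge\ meas(E)\ \ge\ \min\Big\{2\pi,\tfrac{4}{\mu}\arcsin\sqrt{\tfrac{\delta(\infty,f)}{2}}\Big\}.
\]
The only delicate point of the argument is the construction stage: one must balance $\Lambda$ so that it dominates $\log r$ (to trigger the transcendental growth condition in the definition of $\mathcal{TD}(f)$) while remaining little-$o$ of $T(r,f)$ (to satisfy the hypothesis of Lemma 2). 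Once such a $\Lambda$ is secured, everything else is a bookkeeping exercise in the $\limsup$ of measurable sets.
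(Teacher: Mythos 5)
Your proposal is correct and follows essentially the same route as the paper: the same choice $\Lambda(r)=\sqrt{T(r,f)\log r}$, Baernstein's spread lemma applied to $D_\Lambda(r_n)$, and a measure bound on the set $\limsup_n D_\Lambda(r_n)$ (the paper phrases the last step via continuity from above of the decreasing sets $B_N=\bigcup_{n\ge N}D_\Lambda(r_n)$ plus an $\epsilon$, which is equivalent to your reverse Fatou inequality). Your explicit verification that $\Lambda(r)/\log r\to\infty$, which is what makes the directions in $E$ genuinely transcendental, is a point the paper leaves implicit, but the arguments coincide in substance.
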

\begin{proof}
Let $\Lambda(r)=(T(r, f)\log r)^{\frac{1}{2}}$, and clearly $\Lambda(r)=o(T(r,f))$ as $r\to\infty$ since $f$ is transcendental. The value $\theta\in[0,2\pi)$ is called a $\Lambda$-type transcendental direction of $f$  if there exists an unbounded sequence $\{z_n\}$ with $|z_n|=r_n$ such that
$$\arg z=\theta,\quad \log|f(z_n)|\geq \Lambda(r_n).$$
We use $\mathcal{TD}_{\Lambda}(f)$ to denote the set of all $\Lambda$-type transcendental directions, and obviously $\mathcal{TD}_{\Lambda}(f)\subseteq \mathcal{TD}(f)$.
By Lemma 2, for any $\epsilon>0$, there exists a sequence of
$\{r_j\}_{j=1}^{\infty}$ tending to $\infty$ as $j\to\infty$, such that
\begin{equation}\label{ineq-spread-relation}
meas(D_{\Lambda}(r_j))\geq \min\Big\{2\pi, \frac{4}{\mu}\arcsin\sqrt{\frac{\delta(\infty, f)}{2}}\Big\}-\epsilon,
\end{equation}
where $D_{\Lambda}(r)=\{\theta\in
[0,2\pi): \log|f(re^{i\theta})|> \Lambda(r)\}$.\vskip 2mm
\par Next, we investigate the Lebesgue measure of
\[E:=\bigcap_{n=1}^{\infty}B_n\quad\, \text{with}\quad\, B_n=\bigcup_{j=n}^{\infty}D_{\Lambda}(r_j).\]
It is easy to see that $\{B_k\}_{k=1}^{\infty}$ is the monotone decreasing sequence of measurable sets in $[0, 2\pi)$ and
$meas(B_1)\leq 2\pi$. Then by monotone convergence theorem \cite[Theorem 1.19]{rudin}, we obtain
\begin{equation}\label{measure-E}
meas(E)=meas\Big(\bigcap_{n=1}^{\infty}
B_j\Big)=\lim_{n\rightarrow\infty} meas(B_n).
\end{equation}
Noting that $D_{\Lambda}(r_n)\subseteq B_n$ for each $n$, hence
\[\lim_{n\rightarrow\infty} meas(B_n)\geq \liminf_{n\rightarrow\infty} meas(D_{\Lambda}(r_n)).\]
Combining \eqref{ineq-spread-relation} and \eqref{measure-E} yields out
$$
meas(E)\geq\min\Big\{2\pi, \frac{4}{\mu}\arcsin\sqrt{\frac{\delta(\infty, f)}{2}}\Big\}-\epsilon.
$$
For each $\theta\in E$, there exists a sequence $\{r_{j_k}\}_{k=1}^{\infty}$ such that $\theta\in D_{\Lambda}(r_{j_k})$. This means that
$\theta\in \mathcal{TD}_{\Lambda}(f)$, which implies $E\subseteq \mathcal{TD}_{\Lambda}(f)$. Therefore, take $\epsilon$ arbitrarily small,
we get the desired conclusion immediately.
\end{proof}
\noindent{\bf Proof of Theorem \ref{theorem-modification}.}\,\,The observation $\mathcal{TD}(f)\subseteq L(f)$ comes from Proposition \ref{prop-inclusion}.
 Combining Proposition \ref{prop-inclusion} and Proposition \ref{prop-lowner-bound} yields the lower bound of $meas(L(f))$.
\vskip 2mm
 Finally, we give some application of Proposition \ref{prop-inclusion} on entire functions with slow growth.
 For transcendental entire function $f$ with order $\rho(f)\leq 1/2$, there exists a fact that $L(f)=[0,2\pi)$. It comes from the growth property of such functions (see \cite[p.696]{hell}): for $\rho<1/2$, define $K_r=\{\theta\in[0,2\pi):\log|f(re^{i\theta})|<r^{\rho}\}$, we either have a sequence $s_n\to\infty$ satisfying $L(s_n,f)>s_n^{\rho}$ where $L(r,f)=\min_{|z|=r}|f(z)|$, or we have $meas(K_r)\to 0$ as $r\in G\to\infty$ for some set $G$ of logarithmic density $1$.\vskip 2mm
 \par For meromorphic function $f$, we recall \cite[Chapter 5, Theorm 3.2]{gol1},
$$\limsup_{r\to\infty}\frac{\log^+\inf_{|z|=r}|f(z)|}{T(r,f)}\geq \frac{\pi\mu(f)}{\sin(\pi\mu(f))}[\delta(\infty,f)-1+\cos(\pi\mu(f))].$$
Specially when $\mu(f)=0$, the lower bound is just $\delta(\infty,f)$. Since $f$ is transcendental, the above inequality implies that if $\delta(\infty,f)>1-\cos(\pi\mu(f))$ and $\mu(f)<1/2$,
$$\limsup_{r\to\infty}\frac{\log^+\inf_{|z|=r}|f(z)|}{\log r}=\infty.$$
This means that $\mathcal{TD}(f)=[0,2\pi)$ under the above condition, so $L(f)=[0,2\pi)$ if $f$ has a direct tract.

\section{Inverse Problem for Julia Limiting Direction}\label{sect-inverse-problem}
In this section, we would construct the entire function of infinite order and meromorphic functions of any given order with preassigned Julia limiting directions.\vskip 2mm
\par For any non-empty  compact subset $\mathcal{E}$ of $[0,2\pi)$, there always exists a countable dense set $\{\theta_n,\, n\in\mathbb{N}\}\subseteq \mathcal{E}$. When $\mathcal{E}$ has only finitely many elements, we allow $\theta_n=\theta_k$ for all $n\geq k$, $k$ is some integer. The first step of the construction is to find
transcendental entire functions $f_n(z)$ with $\mathcal{TD}(f_n)=\{\theta_n\}$ for each $n$, and the second step is to
choose a sequence $\{a_n\}_{n=1}^{\infty}$ tending to zero rapidly and delicately  such that the function $S(z)=\sum_{n=1}^{\infty}a_n f_n(z)$  satisfies
\begin{enumerate}
\item $\{\theta_n,\,n\in\mathbb{N}\}\subseteq\mathcal{TD}(S)\subseteq L(S)$;\vskip 1mm
\item $\theta\notin L(S)$  for any $\theta \in [0,2\pi)\backslash \mathcal{E}$.
\end{enumerate}
Since the closure of $\{\theta_n,\,n\in\mathbb{N}\}$ is $\mathcal{E}$ and $L(S)$ is closed, we get $L(S)=\mathcal{E}$.\vskip 2mm
To complete the first step, we need an entire function
(of infinite order) with only one Julia limiting direction.  We first recall one function $E_0$ appeared in \cite{hay1}, which grows very fast in a strip, while it tends to zero outside.
\begin{lemma}\label{Hayman-Fuchs}\cite[Lemma 4.1]{hay1}
There exists an entire function $E_0(z)$, such that in the strip $\mathcal{A}_0=\{z: \mathcal{R}e z>0, -\pi\leq \mathcal{I}m z\leq \pi\}$, $E_0(z)=\exp(e^z+z)+O(z^{-2})$,
while outside $\mathcal{A}_0$,  $E_0(z)=O(z^{-2})$ uniformly as $z\to\infty$.
\end{lemma}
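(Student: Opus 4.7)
The plan is to localize the double-exponential growth of the entire function $F(w)=\exp(e^w+w)$, which grows rapidly only on the fundamental strip $\mathcal{A}_0$ and its $2\pi i$-translates, to a single copy of $\mathcal{A}_0$ by means of a Cauchy-type contour integral with a short Laurent subtraction inserted for decay at infinity. Concretely, I would define
\[
E_0(z) = \frac{1}{2\pi i}\int_{\Gamma} \exp(e^w+w)\left(\frac{1}{w-z} + \frac{1}{z} + \frac{w}{z^2}\right) dw,
\]
where $\Gamma$ is the infinite contour bounding $\mathcal{A}_0$: the two horizontal rays $\Im w = \pm\pi$, $\Re w \geq 0$, joined by the vertical segment on $\Re w = 0$, oriented so as to enclose $\mathcal{A}_0$. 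The two added terms are the first two terms of the expansion of $-1/(w-z)$ in powers of $w/z$; with them in place the bracket collapses to $w^2/(z^2(w-z))$, which supplies the $O(z^{-2})$ factor needed outside $\mathcal{A}_0$.

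The first step is absolute convergence of the integral. On $\Im w = \pm\pi$ the identity $e^w = -e^{\Re w}$ gives $|\exp(e^w+w)| = \exp(-e^{\Re w}+\Re w)$, which decays super-exponentially as $\Re w \to +\infty$ and exponentially as $\Re w \to -\infty$; on the vertical segment the integrand is bounded. The integral therefore converges absolutely and, by a standard deformation argument using that the integrand is meromorphic in $w$ with a single simple pole at $w=z$, defines an entire function of $z$.

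The second step is the two asymptotics. Inside $\mathcal{A}_0$, I deform $\Gamma$ inward past $z$: the residue theorem yields $\mathrm{Res}_{w=z}\,F(w)/(w-z) = \exp(e^z+z)$ (the subtracted holomorphic terms contribute no residue), while the remaining integral is $O(z^{-2})$ thanks to the factor $w^2/(z^2(w-z))$ together with the super-exponential decay of $F(w)$ on $\Gamma$. Outside $\mathcal{A}_0$, no residue is picked up, and the direct bound $|w-z|\gtrsim |z|$, valid uniformly in $w\in\Gamma$ since $z$ lies at distance $\gtrsim |z|$ from $\Gamma$, combined with the $w^{-2}$-type decay, yields $E_0(z)=O(z^{-2})$ directly from the integrand.

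The main technical obstacle is the inside-strip asymptotic when $\Re z$ is large and positive: in that regime $\Gamma$ comes close to $z$ along the horizontal rays near $\Re w \approx \Re z$, so the naive bound $|w-z|\gtrsim |z|$ fails on parts of $\Gamma$, and one must instead use the super-exponential smallness of $|F(w)|$ there to dominate the pole of $1/(w-z)$. Calibrating the Laurent subtraction and balancing these two effects is where the delicacy lies; if the first-order subtraction turns out insufficient, one can insert further corrections $w^k/z^{k+1}$ to sharpen the decay. The resulting estimates are classical and are exactly the construction underlying \cite[Lemma 4.1]{hay1}.
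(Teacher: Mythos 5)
The paper does not actually prove this lemma: it is quoted from Hayman \cite[Lemma 4.1, pp.~81--83]{hay1}, and your Cauchy-integral strategy over $\partial\mathcal{A}_0$ is indeed the classical construction being cited. However, two steps in your write-up fail as stated. First, the kernel correction destroys entirety. Writing $F(w)=\exp(e^w+w)$, your definition equals $\frac{1}{2\pi i}\int_\Gamma\frac{F(w)}{w-z}\,dw+\frac{c_1}{z}+\frac{c_2}{z^2}$ with $c_1=\frac{1}{2\pi i}\int_\Gamma F(w)\,dw$ and $c_2=\frac{1}{2\pi i}\int_\Gamma wF(w)\,dw$. Since $F$ has the explicit antiderivative $e^{e^w}$, which tends to $0$ at both ends of $\Gamma$ (on $\Im w=\pm\pi$ it equals $e^{-e^{\Re w}}$), one gets $c_1=0$; but integration by parts gives $c_2=-\frac{1}{2\pi i}\int_\Gamma e^{e^w}\,dw=\pm1\neq0$, because the two ray contributions cancel and the vertical segment contributes $\mp i\int_{-\pi}^{\pi}e^{e^{iy}}\,dy=\mp2\pi i$. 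So your $E_0$ has a double pole at $z=0$ and is not entire. The cure is to drop the subtraction: in the expansion $\frac{1}{w-z}=-\frac1z-\frac{w}{z^2}+\frac{w^2}{z^2(w-z)}$ the dangerous $z^{-1}$ term carries the coefficient $c_1=0$, so the plain Cauchy integral is already $O(z^{-2})$; the decay comes from this vanishing moment, not from modifying the kernel.

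Second, the interior asymptotics cannot be obtained by ``deforming $\Gamma$ inward past $z$.'' For $z$ deep inside the strip, say $z$ real and large (the essential case), sweeping a horizontal ray across $\{|\Im w|<\pi/2\}$ passes through the region where $\Re e^w>0$ and $|F(w)|=\exp(e^{\Re w}\cos\Im w+\Re w)$ is doubly exponentially large; justifying the deformation requires closing the contour at $\Re w=T$, and the closing segment's contribution blows up, so no residue identity results. (Decay on horizontal lines holds only for $\pi/2<|\Im w|<3\pi/2$, so inward deformation is possible only down to $|\Im w|=\pi/2+\varepsilon$.) The standard repair, which also makes your entirety remark precise, is the jump argument with purely local deformations near $\Gamma$: set $E_0=f$ outside $\overline{\mathcal{A}_0}$ and $E_0=f+F$ inside, where $f$ is the plain Cauchy integral; local bending of $\Gamma$ (which only involves the super-exponentially small values of $F$ near the rays) shows the two pieces continue analytically across $\Gamma$, and then one estimates $f$ in the closed strip by shifting the rays outward to $|\Im w|=\pi+\eta$, $\eta<\pi/2$, and splitting $\Gamma$ into $\{|w|\le|z|/2\}$, where $|w-z|\ge|z|/2$, and the remainder, where $|F|$ is super-exponentially small and $|w-z|\ge\eta$. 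The same splitting is needed for your exterior bound: a point just outside the strip, e.g.\ $z=x+i(\pi+\tfrac1{10})$ with $x$ large, lies at bounded distance from $\Gamma$, so your uniform claim $|w-z|\gtrsim|z|$ on $\Gamma$ is false there, while the lemma asserts the $O(z^{-2})$ bound uniformly outside $\mathcal{A}_0$.
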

Obviously, $\mu(E_0)=\infty$. This function is constructed by Cauchy integral, and detailed discussion can be found in \cite[p.81-83]{hay1}. Next, we construct $f_n$ according to $\theta_n$ by using $E_0(z)$.
\begin{lemma}
There exist $R_0, \lambda_0>0$ and  $\lambda\in(0,\lambda_0]$ such that  $f_n(z)=\lambda e^{i\theta_n}E_0(e^{-i\theta_n}z)$ satisfies $\{z:z\notin \mathcal{A}_n=e^{i\theta_j}\mathcal{A}_0, |z|\geq R_0\} \subseteq \mathcal{F}(f_n)$ and $\mathcal{TD}(f_n)=L(f_n)=\{\theta_n\}$.
\end{lemma}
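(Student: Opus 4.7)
The plan is to reduce to the case $\theta_n = 0$ by rotational conjugacy and then to exploit the essentially one-sided growth of $E_0$ provided by Lemma 3. Since $f_n(z) = e^{i\theta_n} g(e^{-i\theta_n} z)$ where $g(z) := \lambda E_0(z)$, the map $f_n$ is conjugate to $g$ via the rotation $z \mapsto e^{-i\theta_n}z$. Hence $\mathcal{J}(f_n) = e^{i\theta_n}\mathcal{J}(g)$, $\mathcal{F}(f_n) = e^{i\theta_n}\mathcal{F}(g)$, and the set of transcendental directions of $f_n$ is the $\theta_n$-translate of that of $g$. In particular the same $R_0$ and $\lambda_0$ will serve all $n$ uniformly, and it suffices to prove the statement for $g$ and the unrotated strip $\mathcal{A}_0$.

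Next I would use Lemma 3 to fix $R_0 \geq 1$ so large that $|E_0(z)| \leq M|z|^{-2}$ on $\{z\notin \mathcal{A}_0,\,|z|\geq R_0\}$, and to record $M_0 := \max_{|z|\leq R_0}|E_0(z)|$. Setting $\lambda_0 := \tfrac{1}{2}\min\{R_0/M_0,\,R_0^3/M\}$, for any $\lambda\in (0,\lambda_0]$ we obtain $|g(z)|\leq R_0/2$ both on $\overline{B(0,R_0)}$ and on the open set $V := \{z\notin \overline{\mathcal{A}_0},\,|z|>R_0\}$. Hence $g$ sends the open set $W := B(0,R_0)\cup V$ into $B(0,R_0/2)\subseteq W$, so by induction $g^k(W)\subseteq \overline{B(0,R_0/2)}$ for every $k\geq 1$, and Montel's theorem gives $W\subseteq \mathcal{F}(g)$. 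Rotating yields the desired Fatou inclusion $\{z\notin \mathcal{A}_n,\,|z|\geq R_0\}\subseteq \mathcal{F}(f_n)$ (the few boundary points on $|z| = R_0$ not already in $B(0,R_0)$ are handled by taking a slightly thickened version of $V$).

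The direction count will then follow by showing $\mathcal{TD}(g) = L(g) = \{0\}$. On the positive real axis, Lemma 3 yields $|g(r)| \sim \lambda\exp(e^r+r)$ as $r\to\infty$, so $\log|g(r)|/\log r\to \infty$ and $0\in \mathcal{TD}(g)$. For any $\theta\in (0,2\pi)\setminus\{0\}$, the ray $\{re^{i\theta}:r>0\}$ leaves $\mathcal{A}_0$ once $r$ is large enough (since $\mathcal{A}_0$ is bounded in the imaginary direction), and there Lemma 3 forces $|g(re^{i\theta})| = O(\lambda r^{-2})\to 0$; thus $\theta\notin \mathcal{TD}(g)$ and $\mathcal{TD}(g)=\{0\}$. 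Proposition 1, applicable since $g$ is entire, then gives $\{0\}\subseteq L(g)$. Conversely, the Fatou containment above implies $\mathcal{J}(g)\cap\{|z|>R_0\}\subseteq\overline{\mathcal{A}_0}$, and since every point of $\overline{\mathcal{A}_0}$ satisfies $|\mathrm{Im}\,z|\leq \pi$, any unbounded sequence in $\mathcal{J}(g)$ must have $\arg z_k\to 0$. Hence $L(g)\subseteq\{0\}$, and after rotation $\mathcal{TD}(f_n)=L(f_n)=\{\theta_n\}$.

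I do not foresee a substantive obstacle: the main care is simultaneously arranging the two inequalities $\lambda M_0<R_0$ (invariance of the small disk) and $\lambda M R_0^{-2}<R_0$ (outer region lands inside the disk), so that every starting point in $W$ is pinned into a fixed compact disk after one iterate. Once $\lambda_0$ is set up with margin, the normal-family argument is routine, and the direction identification reduces to reading off the growth asymptotic of $E_0$ from Lemma 3 along and off the positive real axis.
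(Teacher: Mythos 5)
Your proposal is correct and follows essentially the same route as the paper: scale $E_0$ by a small $\lambda$ so that, via Lemma 3's asymptotics, both $D(0,R_0)$ and the large-modulus complement of the strip are mapped into a small disk, apply Montel's theorem (the paper handles the outer region by complete invariance of the Fatou set, which also disposes of your boundary-circle caveat), and conclude $L(f_n)=\{\theta_n\}$. The only cosmetic difference is that you verify $\theta_n\in\mathcal{TD}(f_n)$ directly from the growth of $E_0$ along the central ray, whereas the paper deduces $\mathcal{TD}(f_n)=\{\theta_n\}$ from $\emptyset\neq\mathcal{TD}(f_n)\subseteq L(f_n)$.
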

\begin{proof}
By Lemma \ref{Hayman-Fuchs}, there exist some positive constants $C$ and $R_0$ such that
\begin{equation}\label{asymptotic-behaviour}
\begin{split}
|f_n(e^{i\theta_n}z)-\lambda e^{i\theta_n}\exp(e^z+z)|& \leq \lambda C|z|^{-2},\,\quad\text{for}\,\, z\in \mathcal{A}_0\cap \{z:|z|\geq R_0\};\\
|f_n(z)|& \leq \lambda C|z|^{-2},\,\quad\text{for}\,\, z\in \{z:|z|\geq R_0\}\setminus \mathcal{A}_n.
\end{split}
\end{equation}
We could take $R_0$ large enough such that
\begin{equation}\label{setting-1}
C|z|^{-2}\leq C R_0^{-2}<1/3, \quad \text{for}\quad |z|\geq R_0,
\end{equation}
 and $\lambda_0$  sufficiently small to satisfy \[R'=\lambda_0(\exp\{e^{R_0}+R_0\}+1/3)<R_0.\]
By the maximum modulus theorem, and combining \eqref{asymptotic-behaviour}, we have
\begin{equation}\label{disk-estimation}
|f_{n}(z)|\leq M(R_0,f_n)\leq \lambda_0(\exp\{e^{R_0}+R_0\}+CR_0^{-2})<R',
\end{equation}
for all $z\in D(0,R_0)$, where $D(0,R_0)$ denotes the open disk with center point $0$ and radius $R_0$. This means that $f_n$ maps $D(0, R_0)$ to $D(0, R')$, and we get $D(0, R_0)\subseteq \mathcal{F}(f_n)$  by Montel's theorem. We see that $f_n$ maps
$\{z:|z|\geq R_0\}\setminus \mathcal{A}_n$ to $D(0, R_0)$. By the invariance property of Fatou set, we have
$$\{z: z\notin \mathcal{A}_n, |z|\geq R_0\}\subseteq  \mathcal{F}(f_n).$$
Thus,
$L(f_n)=\{\theta_n\}$. $\mathcal{TD}(f_n)=L(f_n)=\{\theta_n\}$ follows from $\emptyset\not=\mathcal{TD}(f_n)\subseteq L(f_n)$.
\end{proof}

 Now we use induction to choose the sequence $\{a_n\}_{n=1}^{\infty}$ with $a_1=1$. Suppose that $a_1,a_2,\cdots,a_k$ are already
chosen, we would take $a_{k+1}\in(0, 2^{-k}]$ such that $a_{k+1}f_{k+1}$ maps $D(0, R_0)$ to $D(0, 2^{-k}R')$ by \eqref{disk-estimation}. Furthermore, if
$\mathcal{A}_{k+1}\bigcap\big(\bigcup_{j=1}^{k}\mathcal{A}_j\big)\not=\emptyset$, then it must be bounded. We can choose $a_{k+1}$ small such that
\begin{equation}\label{construction-scaling}
|a_{k+1}f_{k+1}|\leq 2^{-k},\quad \text{for}\,\,z\in \mathcal{A}_{k+1}\bigcap\big(\bigcup_{j=1}^{k}\mathcal{A}_j\big).
\end{equation}
For the already chosen sequence $\{a_n\}_{n=1}^\infty$, we set
\begin{equation}\label{construction-limit-function}
S(z):=\sum_{n=1}^{\infty}a_n f_n(z)=\lim\limits_{k\rightarrow\infty}S_k(z),
\end{equation}
where $S_k:=\sum_{n=1}^{k}a_n f_n$.
Noting that $M(r, f_j)=M(r, f_1)$ for all $j$, and $M(r,f_j)$ denotes the maximum modulus of $f_j$ in $\{z:\,|z|=r\}$. We can get
\[|\sum_{j=n}^{m}a_jf_j(z)|\leq (\sum_{j=n}^{m}2^{-j})M(r,f_1)\]
for any $|z|\leq r$ and any $n<m$. This means that $S_k$  converges locally uniformly to $S$, thus $S$ is an  entire function.
\begin{prop}\label{prop-entire-infinite-order}
Let the function $S$ be defined as in \eqref{construction-limit-function}. Then $\mu(S)=\infty$ and
\[\mathcal{TD}(S)=L(S)=\mathcal{E}.\]
\end{prop}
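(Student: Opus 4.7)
\emph{Proof plan.}
I would establish three statements: (i) $\mu(S)=\infty$, (ii) every $\theta_n$ lies in $\mathcal{TD}(S)$, and (iii) no $\theta\in[0,2\pi)\setminus\mathcal{E}$ belongs to $L(S)$. Combining (ii) with Proposition~\ref{prop-inclusion} and the closedness of both $\mathcal{TD}(S)$ and $L(S)$, together with the density of $\{\theta_n\}$ in $\mathcal{E}$, upgrades to $\mathcal{E}\subseteq\mathcal{TD}(S)\subseteq L(S)$; statement (iii) supplies the reverse inclusion, yielding $\mathcal{TD}(S)=L(S)=\mathcal{E}$.

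For (i) and (ii) I would probe $S$ along the ray $z_r=re^{i\theta_n}$, where $e^{-i\theta_n}z_r=r$ is deep in $\mathcal{A}_0$. Lemma \ref{Hayman-Fuchs} gives
\[ f_n(z_r)=\lambda e^{i\theta_n}\bigl[\exp(e^r+r)+O(r^{-2})\bigr]. \]
For $j$ with $\theta_j\neq\theta_n$ and $r|\sin(\theta_j-\theta_n)|>\pi$, the rotated point $re^{i(\theta_n-\theta_j)}$ exits $\mathcal{A}_0$, so \eqref{asymptotic-behaviour} yields $|f_j(z_r)|\leq\lambda C r^{-2}$. Indices $j$ with $\theta_j=\theta_n$ (only in the finite-$\mathcal{E}$ case) sum to a fixed positive real multiple $c_n f_n(z_r)$. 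The remaining ``near'' indices $j$, whose strips still contain $z_r$, are controlled by the direct estimate $|f_j(z_r)|/|f_n(z_r)|\leq\exp\bigl(e^r(\cos(r(\theta_j-\theta_n))-1)\bigr)$, whose right side is exponentially tiny except when $\theta_j$ is astronomically close to $\theta_n$; combined with the decay $a_j\leq 2^{-(j-1)}$ and a suitable enumeration of the dense sequence $\{\theta_j\}$ that puts near-$\theta_n$ entries at very high index, the resonant tail is negligible compared to $|a_n f_n(z_r)|$. The net bound $|S(z_r)|\geq\kappa_n\exp(e^r/2)$ along an unbounded sequence of $r$'s then yields $\theta_n\in\mathcal{TD}(S)$ and, via the standard comparison $T(2r,S)\geq\tfrac{1}{3}\log^+ M(r,S)$, also $\mu(S)=\infty$.

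For (iii) I fix $\theta\notin\mathcal{E}$ and $\epsilon>0$ with $d(\theta,\mathcal{E})\geq 2\epsilon$, and examine the sector $\Omega_\epsilon=\{z:|\arg z-\theta|<\epsilon\}$. The geometric core is that $\mathcal{A}_n=e^{i\theta_n}\mathcal{A}_0$, of fixed height $2\pi$, has angular half-aperture at most $c/|z|$ about the direction $\theta_n$. Hence for $|z|>c/\epsilon$ with $z\in\Omega_\epsilon$, $z$ cannot lie in any $\mathcal{A}_n$, since that would force $|\theta-\theta_n|<\epsilon+c/|z|<2\epsilon$. Applying the outer bound in \eqref{asymptotic-behaviour} uniformly in $n$ and summing against $\sum a_n\leq 2$, we obtain $|S(z)|\leq 2\lambda C|z|^{-2}$ on $\Omega_\epsilon\cap\{|z|>R_2\}$ for a suitable $R_2$. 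Taking $R_2$ so large that this bound is below $R_0$, $S$ maps this set into $D(0,R_0)$. The Montel argument used in the preceding lemma for the individual $f_n$, applied now to $S$ itself (after possibly shrinking $\lambda_0$ so that $\sum a_n\cdot R'<R_0$), shows $D(0,R_0)\subseteq\mathcal{F}(S)$; total invariance of the Fatou set then gives $\Omega_\epsilon\cap\{|z|>R_2\}\subseteq\mathcal{F}(S)$, so $\theta\notin L(S)$.

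The main technical obstacle lies in the resonant part of step (ii): when $\theta_n$ is a limit point of the dense sequence $\{\theta_j\}$, infinitely many strips may still contain $z_r$, and no naive triangle-inequality bound on the resulting tail suffices, since $|f_j(z_r)|$ can a priori be of the same order as $|f_n(z_r)|$. The resolution rests jointly on the exponential depression $\exp(e^r(\cos(r\delta_j)-1))$ of $|f_j|$ off the center of its own strip, the geometric-series decay $a_j\leq 2^{-(j-1)}$, and a cautious enumeration of the dense subset of $\mathcal{E}$ that places the closest-to-$\theta_n$ entries at indices so large that their $a_j$ contribution is crushed.
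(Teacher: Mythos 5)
Your proposal reaches the correct conclusion, and your steps (i) and (iii) follow the paper's outline; in fact your sector version of (iii) is the cleaner formulation, since working with the full sector $\Omega_\epsilon\cap\{|z|>R_2\}$ (rather than only the single rotated strip $\mathcal{A}_\theta(R,+\infty)$ that the paper places in $\mathcal{F}(S)$) is exactly what one needs to rule $\theta$ out of $L(S)$, and the estimate used extends to the sector without change. Where you genuinely diverge is the resonant part of (ii). The paper needs neither the off-center depression $\exp\bigl(e^r(\cos(r\sin\delta_j)-1)\bigr)$ nor any special enumeration of the dense set: when the directions differ, the overlap $\mathcal{A}_{k+1}\cap\bigcup_{j\le k}\mathcal{A}_j$ is a \emph{bounded} set, and condition \eqref{construction-scaling} --- which is part of the definition of the coefficients in \eqref{construction-limit-function} and which you never invoke --- forces $|a_{k+1}f_{k+1}|\le 2^{-k}$ there. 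Hence along $\arg z=\theta_n$ every term of index $j>n$ whose strip still contains $z_r$ contributes at most $2^{-(j-1)}$, the finitely many indices $j<n$ with $\theta_j\ne\theta_n$ leave their (bounded) overlap with $\mathcal{A}_n$ once $r$ is large, repeated directions (finite $\mathcal{E}$) only reinforce the main term, and the lower bound $|S(re^{i\theta_n})|\ge a_n\lambda\exp(e^r-r)-O(1)$ follows at once, for \emph{any} dense enumeration and any admissible $\{a_k\}$. Your alternative (exponential decay of $E_0$ off the center line of its strip, plus a separated-yet-dense enumeration placing near-$\theta_n$ entries at huge index) can be made to work --- e.g.\ a greedy choice with $|\theta_j-\theta_i|\gtrsim 4^{-j}$ for $i<j$ ensures the dangerous indices at radius $r$ have $j\gtrsim r$, so their $2^{-(j-1)}$ weights are negligible --- but it carries burdens the paper avoids: the separation must hold simultaneously for all $n$, and you are in effect re-choosing data of the construction, so you prove the existence of a suitable $S$ rather than the stated property of the $S$ already fixed before the proposition. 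One small correction: for $\mu(S)=\infty$ (lower order) you need $|S(re^{i\theta_n})|\ge\kappa_n\exp(e^r/2)$ for \emph{all} sufficiently large $r$, not merely along an unbounded sequence, which by itself only yields $\rho(S)=\infty$; your estimates do hold for all large $r$ on the fixed ray, so they should be stated in that form.
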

\begin{proof}
For each $k\in\mathbb{N}$, from the choice way of $\{a_n\}_{n=1}^\infty$ and \eqref{asymptotic-behaviour}, it follows that
$$
|S_{k+1}(z)|\leq (\sum_{j=0}^{k}2^{-j})C|z|^{-2}\leq 2C R_0^{-2}
$$
uniformly on $\{z: |z|\geq R_0\,\,\text{and}\,\,z\notin \bigcup\limits_{j=1}^{k+1} \mathcal{A}_j\}.$ At the same time, we have
$$S_{k+1}\big(D(0,R_0)\big)\subseteq D(0,(\sum_{j=0}^{k}2^{-j})R')\subseteq D(0,2R').$$
Thus, after taking $R'$ with $2R'\leq R_0$,
$D(0,R_0)\subseteq \mathcal{F}(S_{k+1})$ by Montel's theorem again. From \eqref{setting-1} and the invariance property of Fatou set, we also have
\[\{z: z\notin \bigcup\limits_{j=1}^{k+1} \mathcal{A}_j,\,|z|\geq R_0\}\subseteq \mathcal{F}(S_{k+1}).\]
This implies $\mathcal{J}(S_{k+1})\subseteq \bigcup_{j=1}^{k+1}\mathcal{A}_j$, so
\begin{equation}\label{finite-step-limiting-direction}
L(S_{k+1})\subseteq\{\theta_1,\theta_2,\cdots,\theta_{k+1}\}.
\end{equation}
\par For any two distinct $m,j\in\{1,2,\cdots,k+1\}$, we take $z=re^{i\theta_j}$ then $z\notin \mathcal{A}_m$  if $r$ is  large enough.
Hence by \eqref{asymptotic-behaviour}, we have
$$|f_j(z)|\geq \lambda \exp(e^r-r)-\lambda Cr^{-2},\quad |f_m(z)|\leq \lambda Cr^{-2}.$$
Then we obtain
\begin{align*}
|S_{k+1}(z)|\geq |a_jf_j(z)|-\sum_{1\leq m\not=j\leq k+1}|a_mf_m(z)|\geq \lambda\big(2^{-j}\exp(e^r-r)-\sum_{n=0}^k2^{-n}Cr^{-2}\big).
\end{align*}
This leads to $\{\theta_j\}_{j=1}^{k+1}\subseteq \mathcal{TD}(g_{k+1})$.  Hence,  by Theorem \ref{theorem-modification} and \eqref{finite-step-limiting-direction}, we get
$$L(S_{k+1})=\mathcal{TD}(S_{k+1})=\{\theta_1,\theta_2,\cdots,
\theta_{k+1}\}.$$

 Similarly for the function $S$, we also consider the ray $\arg z=\theta_j$. Taking $z=re^{i\theta_j}$, we have
$$
|S(z)|\geq |a_jf_{j}(z)|-\sum_{n\neq j}|a_n f_n(z)|\geq \lambda(2^{-j}\exp(e^r-r)-2C r^{-2})
$$
for $r$  sufficiently large  by  \eqref{asymptotic-behaviour} and \eqref{construction-scaling}.
This implies that for each $j$, $\theta_j\in \mathcal{TD}(S)\subseteq L(S)$  and $\mu(S)=\infty$. Recall that $\mathcal{TD}(S)$ is closed,  then by Proposition \ref{prop-inclusion}, we deduce that
 \begin{equation}\label{one-sided-inclusion}
 \mathcal{E}=\overline{\{\theta_n,\,n\in\mathbb{N}\}}\subseteq \mathcal{TD}(S)\subseteq L(S).
 \end{equation}
 \vskip 0.5mm
\par Finally, for each $\theta\not\in \mathcal{E}$, we will prove $\theta\notin
L(S)$. When $\mathcal{E}=[0,2\pi)$, it is done. Thus we assume $\mathcal{E}\subsetneq [0,2\pi)$, so $[0,2\pi)\backslash \mathcal{E}$ is a union of at most countably many open intervals, and $\theta$ belongs to one above interval $I=(a,b)$ with $a,b\in \mathcal{E}$. Denote
\[\mathcal{A}_{\theta}(R, +\infty)=e^{i\theta}\mathcal{A}_0\bigcap \{z: |z|> R\}.\]
For any  $\epsilon>0$, there exists $\tilde{R}_{0}=\tilde{R}_0(\epsilon)$  such that for $R>R_0+\tilde{R}_0$,
\[\mathcal{A}_{\theta}(R, +\infty)\subseteq\{z:a+\epsilon\leq\arg z\leq b-\epsilon\},\]
which implies \[\mathcal{A}_{\theta}(R, +\infty)\bigcap \bigcup_{n=1}^{\infty}\mathcal{A}_{n}=\emptyset.\]
Then from the behavior of $|f_n|$ outside $\mathcal{A}_n$,  we have
\begin{equation}\label{estimate-S-Fatou-direction}
|S(z)|\leq \sum_{j=1}^\infty |a_nf_n(z)|\leq \sum_{j=1}^\infty 2^{-(j-1)}CR_0^{-2}=2CR_0^{-2}
\end{equation}
 for all $z\in \mathcal{A}_{\theta}(R,+\infty)$. Similarly as the mapping property of $S_{k+1}$, we also know
 $$S(D(0,R_0))\subseteq D(0,\sum_{k=0}^{\infty}2^{-k}R')\subseteq D(0,2R')$$
  by \eqref{disk-estimation}.
We take $R'$ in Lemma 4 satisfying $2R'<R_0$, then $D(0, R_0)\subseteq\mathcal{F}(S)$. Combining \eqref{setting-1} and \eqref{estimate-S-Fatou-direction} yield out $S(\mathcal{A}_{\theta}(R,+\infty))\subseteq D(0, R_0)$, further, we get
\[\mathcal{A}_{\theta}(R,+\infty)\subseteq \mathcal{F}(S),\quad \text{then}\quad \theta\notin L(S).\]

From the above facts together with \eqref{one-sided-inclusion}, we get that
 \[\mathcal{TD}(S)=L(S)=\mathcal{E}.\]
\end{proof}

 This proves the first part of Theorem \ref{theorem-infinite-order}. Next, we would construct the meromorphic function $f$ with order $\rho\in[0,\infty]$ and $L(f)=\mathcal{E}$. Note that all poles are located in the Julia set, the idea of the construction is very simple, we only need very careful arrangements of the poles to match with the given set $\mathcal{E}$.\vskip 2mm
\par At first, we derive a new sequence  $\{\hat{\theta}_k\}_{k=1}^{\infty}$ from $\{\theta_n,\,n\in\mathbb{N}\}$, such that for each $\theta_n$, $\sharp\{k:\hat{\theta}_k=\theta_n\}$ is countable and infinitely many. We choose $\{r_k \}_{k=1}^{\infty}$ as below
\begin{equation}\label{setting-r-k}
r_k=\left\{
\begin{array}{c}
\displaystyle (m_{k})^{\frac{1}{\rho}}, \quad\quad\quad\quad \rho\in(0,\infty),\\
\displaystyle (m_k)^k, \quad\qquad\quad\quad\quad \rho=0,\\
\displaystyle (m_k)^{\frac{1}{k}}, \quad\quad\quad\quad\quad \rho=\infty.
\end{array}
\right.
\end{equation}
where the sequence $\{m_k\}_{k=1}^\infty$ is given by
$$m_{k+1}=2^{\sum_{j=1}^{k}m_j}(k\in\mathbb{N}),\quad\text{and}\quad m_1=2.$$
Both $m_k$ and $r_k$ are monotone increasing in $k$, and tends to infinity as $k\rightarrow\infty$. Moreover, $\log _2 m_k\geq 2^{k-1}$ follows by induction. From \eqref{setting-r-k}, we have
\[r_{k+1}-r_{k}=\left\{
\begin{array}{c}
\displaystyle 2^{m_k/\rho}2^{\sum_{j=1}^{k-1}m_j/\rho}-m_k^{1/\rho}, \quad\quad\quad\qquad\qquad \rho\in(0,\infty),\\
\displaystyle 2^{(k+1) m_k}2^{(k+1)\sum_{j=1}^{k-1}m_j}-(m_k)^k, \quad\qquad\quad\quad\quad \rho=0,\\
\displaystyle 2^{m_k/(k+1)}2^{\sum_{j=1}^{k-1}m_j/(k+1)}-(m_k)^{1/k}, \quad\quad\quad\quad\,\, \rho=\infty.
\end{array}
\right.
\]
Thus, there exists the positive integer $k_0$ dependent on $\rho$ such that
$$r_{k+1}-r_{k}\geq 100,\quad\text{for}\quad k\geq k_0.$$
\vskip 1mm
\par Now, we denote $a_k=r_k \exp\{i\hat{\theta}_k\}$, and set
\begin{equation}\label{construction-limit-function-2}
g_0(z):=\sum\limits_{k=1}^{\infty}(z-a_k)^{-m_k}.
\end{equation}
For any  $z\not\in\{a_k\}_{k=1}^{\infty}$, let $$\delta(z):=\inf\{|z-a_k|,k=1,2,\cdots\},$$
and denote $k(z)$ denotes the positive integer dependent on $z$ with $|z-a_{k(z)}|=\delta(z)$.
From the choice of $a_k$, it is easy to see
\[\delta(z)\geq \max\{\big||z|-r_{k(z)}\big|,|\arg z-\hat{\theta}_{k(z)}|\}>0.\]
Then for $k>\max\{k_0,k(z)\}$ and any $p\in \mathbb{N}$, we have $r_k-|z|\geq 100+\delta(z)$, so
\begin{equation*}
\begin{split}
\Big|(z-a_k)^{-m_k}+\cdots+(z-a_{k+p})^{-m_{k+p}}\Big|&\leq (r_k-|z|)^{-k}+\cdots+(r_{k+p}-|z|)^{-(k+p)}\\
&\leq 100^{-k}+\cdots+100^{-(k+p)}.
\end{split}\end{equation*}
Thus, $\sum_{k=1}^n(z-a_k)^{-m_k}$ is locally uniformly convergent to $g_0(z)$. This means that $g_0(z)$ is meromorphic in $\mathbb{C}$.
\begin{prop}\label{prop-mermorphic-function}
Let $\rho\in[0,\infty]$, and $g_{\lambda}=\lambda g_0$, where $g_0$ is defined as in \eqref{construction-limit-function-2}. Then  there exists $\lambda>0$ such that $L(g_{\lambda})=\mathcal{TD}(g_\lambda)=\mathcal{E}$ and $\rho(g_{\lambda})=\rho$.
\end{prop}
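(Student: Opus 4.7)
Plan: the goal is to verify the four inclusions
\begin{equation*}
\mathcal{E}\subseteq L(g_\lambda),\quad L(g_\lambda)\subseteq\mathcal{E},\quad \mathcal{E}\subseteq\mathcal{TD}(g_\lambda),\quad \mathcal{TD}(g_\lambda)\subseteq\mathcal{E},
\end{equation*}
together with $\rho(g_\lambda)=\rho$, by choosing $\lambda>0$ sufficiently small. The architecture parallels Proposition \ref{prop-entire-infinite-order}: the principal parts $(z-a_k)^{-m_k}$ play the role of the building blocks $f_n$, while the fact that every pole of a meromorphic function lies in its Julia set replaces the transcendental growth mechanism used there.

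The main analytic input is the decay of $g_0$ in any sector bounded away from $\mathcal{E}$. Fix $\theta\notin\mathcal{E}$ and set $\eta=\operatorname{dist}(\theta,\mathcal{E})/2>0$. Since every $\hat\theta_k\in\mathcal{E}$, for $z$ in the truncated sector $\mathcal{A}_\theta(R,+\infty)$ one has $|\arg z-\hat\theta_k|\geq\eta$, and a direct law-of-cosines calculation yields $|z-a_k|\geq c_\eta\max(|z|,r_k)$ for a constant $c_\eta>0$ depending only on $\eta$. Because $m_k\geq 2$ and the radii grow very fast (using $r_{k+1}-r_k\geq 100$ for large $k$), summing termwise gives $|g_0(z)|=O(|z|^{-2})$ uniformly on $\mathcal{A}_\theta(R,+\infty)$ for $R$ large. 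Taking $R_0<r_1$, $g_0$ is holomorphic and bounded on $\overline{D(0,R_0)}$; then I choose $\lambda>0$ small enough that $g_\lambda$ maps $\overline{D(0,R_0)}$ into $D(0,R_0/2)$ and that $|g_\lambda|<R_0$ on $\mathcal{A}_\theta(R,+\infty)$ for an $R=R(\theta)$. Montel's theorem yields $D(0,R_0)\subseteq\mathcal{F}(g_\lambda)$, and forward invariance extends this to $\mathcal{A}_\theta(R,+\infty)\subseteq\mathcal{F}(g_\lambda)$, so $\theta\notin L(g_\lambda)$. The same polynomial bound precludes transcendental growth, hence $\theta\notin\mathcal{TD}(g_\lambda)$.

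For the reverse inclusions, $\mathcal{E}\subseteq L(g_\lambda)$ is immediate: every pole lies in $\mathcal{J}(g_\lambda)$; each $\theta_n$ equals $\hat\theta_{k_i}$ for an unbounded sequence $k_i\to\infty$; the corresponding poles $a_{k_i}$ witness $\theta_n\in L(g_\lambda)$; and taking closure gives $\mathcal{E}\subseteq L(g_\lambda)$. For $\mathcal{E}\subseteq\mathcal{TD}(g_\lambda)$, near each such pole one has $|g_\lambda(z)|\sim \lambda|z-a_k|^{-m_k}$, so picking $z_i=a_{k_i}+\epsilon_ie^{i\hat\theta_{k_i}}$ with $\epsilon_i\to 0$ rapidly enough drives $\log|g_\lambda(z_i)|/\log|z_i|\to+\infty$. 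For the order, the identity $N(r,g_\lambda)=\sum_{r_k\leq r}m_k\log(r/r_k)$ evaluated at $r=2r_k$ produces $N(2r_k)\gtrsim m_k\log 2=r_k^\rho\log 2$ for $\rho\in(0,\infty)$, so $\log N(2r_k)/\log(2r_k)\to\rho$ and $\rho(g_\lambda)\geq\rho$; the crude upper bound $N(r,g_\lambda)\leq(\sum_{r_k\leq r}m_k)\log r\leq r^\rho\log r$, combined with $m(r,g_\lambda)=O(1)$ coming from the sector estimate, supplies the matching upper bound and gives $\rho(g_\lambda)=\rho$. The endpoint cases $\rho=0$ and $\rho=\infty$ follow the same scheme with the prescribed choices of $r_k$. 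The delicate point is arranging a single $\lambda$ to realize all of these estimates simultaneously, but because the sector bound is uniform once $R(\theta)$ is chosen and the smallness of $g_0$ on $\overline{D(0,R_0)}$ is a single scalar condition, this is handled exactly as in the corresponding step of Proposition \ref{prop-entire-infinite-order}.
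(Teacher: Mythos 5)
Your architecture does parallel the paper's: the poles force $\mathcal{E}\subseteq L(g_\lambda)\cap\mathcal{TD}(g_\lambda)$, the angular decay of $g_0$ away from $\mathcal{E}$ excludes all other directions (your route via forward invariance of the truncated sector is fine, and essentially equivalent to the paper's covering of $\mathcal{J}(g_\lambda)$ by the disks $D(a_k,2)$), and the order is read off from the counting function. However, there is a genuine gap in your proof that $\rho(g_\lambda)\le\rho$. You bound $T=m+N$ by asserting that $m(r,g_\lambda)=O(1)$ ``comes from the sector estimate''. The sector estimate only controls $|g_\lambda(re^{i\theta})|$ for $\theta$ at a definite angular distance from $\mathcal{E}$; it gives no information on the part of the circle whose argument lies in or near $\mathcal{E}$, and in the admissible case $\mathcal{E}=[0,2\pi)$ there is no such sector at all, while the upper bound on the order must still be proved. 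The correct source of the proximity-function bound is radial, not angular: for $r\in[r_k+2,\,r_{k+1}-2]$ every point of the circle $|z|=r$ is at distance at least $2$ from $a_k$ and $a_{k+1}$ and at distance at least $102$ from all other poles (this is exactly where $r_{k+1}-r_k\ge 100$ enters), whence $|g_0|\le 2$ on the whole circle and $m(r,g_0)\le\log 2$ there.

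In addition, even with that bound you must still handle the exceptional radii $r\in(r_k-2,r_k+2)$, where the circle comes close to (or passes through) a pole and the termwise estimate fails; the paper disposes of them by monotonicity of $T$, comparing $T(r)$ for such $r$ with $T(r_k+2)$, so that the $\limsup$ in the definition of order is unaffected. Without these two steps the inequality $\rho(g_\lambda)\le\rho$ is not established. The remainder of your argument --- the sector decay $|g_0(z)|=O(|z|^{-2})$, the inclusions $\mathcal{E}\subseteq L(g_\lambda)\cap\mathcal{TD}(g_\lambda)$ via points near the poles, the exclusion of $\theta\notin\mathcal{E}$, and the lower bound $\rho(g_\lambda)\ge\rho$ from $N(2r_k,g_\lambda)\gtrsim m_k\log 2$ --- is correct and essentially the paper's.
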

\begin{proof}
For any  $k\geq k_0$, we consider the points on $|z|=r\in [r_k+2, r_{k+1}-2]$. From the choice of $\{r_n\}$, it is easy to see
$|z-a_j|\geq 2$ for $j=k,k+1$, and
\begin{equation*}\begin{split}
|z-a_j|&\geq r_j-r\geq r_j-r_{k+1}+r_{k+1}-r\geq 102,\quad \text{for}\,\,j\geq k+2,\\
|z-a_j|&\geq r_k-r_j+2\geq 102,\quad \text{for}\,\,j\leq k-1.
\end{split}\end{equation*}
Taking the estimation into $|g_0(z)|$ yields out
\[\sup_{|z|=r}|g_0(z)|\leq \sum_{j=k_0}^\infty 100^{-j}+2^{-1}+2^{-1}\leq 2.\]
Therefore, for $r\in [r_k+2, r_{k+1}-2]$ with $k\geq k_0$, we have $m(r,g_0)\leq \log 2$, so
\begin{equation}\label{Nevanlinna-1}
 \quad T(r,g_0)=N(r,g_0)+O(1);
\end{equation}
and
\begin{equation}\label{Nevanlinna-2}
N(r,g_0)=\int_{1}^{r}\frac{n(t,g_0)}{t}dt
\leq n(r,g_0)\log r=\Big(\sum_{j=1}^{k}m_j\Big)\log r.
\end{equation}
Combining \eqref{Nevanlinna-1} and \eqref{Nevanlinna-2}, we get
$$
T(r,g_0)\leq \Big(\sum_{j=1}^{k}m_j\Big)\log r+\log2,\quad r\in[r_k+2,r_{k+1}-2].
$$
\par Setting $\mathcal{X}=\cup_{k=1}^{\infty}(r_k-2,r_k+2)$. The above inequality means, for $\rho\in(0,\infty)$,
$$
\limsup_{r\rightarrow\infty;\, r\notin \mathcal{X}}
\frac{\log T(r,g_0)}{\log r}\leq
\limsup_{k\rightarrow\infty}\frac{\log \big(\sum\limits_{j=1}^{k}m_j\big)}{\log (r_k+2)}=\rho\limsup_{k\rightarrow\infty}\frac{\log \big(\sum\limits_{j=1}^{k}m_j\big)}{\log m_k}.
$$
The fact $\sum\limits_{j=1}^{k-1}m_j=\log_2 m_k$ implies
\begin{equation}\label{construction-limit-sequence}
\log\big(\sum\limits_{j=1}^{k}m_k\big)/\log m_k\to 1,\quad \text{as}\quad j\to\infty.
\end{equation}
Hence, we can conclude that
\begin{equation}\label{ineq-1}
\limsup_{r\rightarrow\infty;\, r\notin \mathcal{X}}
\frac{\log T(r,g_0)}{\log r}\leq \rho.
\end{equation}
While for any $r\in\mathcal{X}$, we note
\begin{equation}\label{ineq-2}
\limsup_{r\rightarrow\infty}\frac{\log T(r,g_0)}{\log r}\leq \limsup_{k\rightarrow\infty}\frac{\log T(r_k+2)}{\log (r_k-2)}=\limsup_{k\rightarrow\infty}\frac{\log T(r_k+2)}{\log (r_k+2)}.
\end{equation}
Combining the inequalities \eqref{ineq-1} and \eqref{ineq-2}, it follows that $\rho(g_0)\leq \rho$.\vskip 2mm
\par For $\rho=0$, consider the choice of $r_k=(m_k)^k$, we have $$
\limsup_{r\rightarrow\infty;\, r\notin \mathcal{X}}
\frac{\log T(r,g_0)}{\log r}\leq
\limsup_{k\rightarrow\infty}\frac{\log \big(\sum\limits_{j=1}^{k}m_j\big)}{k\log m_k}.
$$
Similarly as above, we can deduce that $\rho(g_0)=0$.\vskip 2mm
\par And also, we deduce that for $\rho\in(0,\infty]$,
\begin{equation*}
\begin{split}
T(2(r_k+1), g_0)&\geq N(2(r_k+1), g_0)\geq\int_{r_k+1}^{2(r_k+1)}\frac{n(t,g_0)}{t}dt\\
&\geq n(r_k+1, g_0)\log 2\geq \big(\sum_{j=1}^km_j\big)\log 2,
\end{split}
\end{equation*}
then by \eqref{construction-limit-sequence} again,
\[\lim_{k\rightarrow\infty}\frac{\log T(2r_k+2,g_0)}{\log (2r_k+2)}\geq \lim_{k\rightarrow\infty}\frac{\log \big(\sum\limits_{j=1}^{k}m_j\big)}{\log (r_k+2)}\geq \rho.\]
This implies that for $\rho\in(0,\infty]$, $\rho(g_0)\geq \rho$. Therefore, for $\rho\in[0,\infty]$, $g_0$ defined in \eqref{construction-limit-function-2} must have $\rho(g_0)=\rho$.\vskip 2mm
 \par Since $g_0$ is holomorphic on $D(0,1)$, by maximum modulus theorem, there is a constant $M>0$ such that
$\sup_{|z|\leq 1}|g_0(z)|\leq M$.
Thus, we can take a positive constant $\lambda\leq \min\{1,\frac{1}{2M}\}$ such that $g_{\lambda}$ maps $D(0,1)$ to $D(0,1/2)$. This implies $D(0,1)\subseteq\mathcal{F}(g_{\lambda})$.  It is not difficult to check
\[|g_0(z)|\leq \sum_{k=1}^\infty 2^{-m_k}< 1,\quad \text{for}\quad z\in \bigcap_{k=1}^{\infty}\{z: |z-a_k|\geq 2\}.\]
It means $\bigcap_{k=1}^{\infty}\{z: |z-a_k|\geq 2\}\subseteq \mathcal{F}(g_{\lambda})$, thus we get
\[\mathcal{J}(g_{\lambda})=\mathbb{C}\setminus\mathcal{F}(g_{\lambda})\subseteq \bigcup_{k=1}^{\infty}D(a_k,2).\]
\par Recall that for each $j$, $a_j$ is the pole of $g_{\lambda}$ with $\arg a_j\in \{\hat{\theta}_k\}_{k=1}^\infty$, and $a_j\in\mathcal{J}(g_{\lambda})$. And for each $\theta_n$, $\sharp\{k:\hat{\theta}_k=\theta_n\}=\infty$,
and $\lim\limits_{j\rightarrow\infty}|a_j|=\infty$. Thus, both $L(g_{\lambda})$ and $\mathcal{TD}(g_\lambda)$ contain  $\{\theta_n,\,n\in\mathbb{N}\}$, which implies $\overline{\{\theta_n,\,n\in\mathbb{N}\}}=\mathcal{E}\subseteq L(g_{\lambda})\bigcap\mathcal{TD}(g_\lambda)$.\vskip 2mm
\par For any $\theta\not\in \mathcal{E}$, we set \[d(\theta):=\inf\{|\theta-\theta_n|, \quad n=1,2,\cdots\},\]
then $d(\theta)>0$. We now claim that
$\theta\not\in L(g_{\lambda})$ if $\theta\not\in \mathcal{E}$. Otherwise, we assume that $\theta\in L(g_{\lambda})$, then the ray $\arg z=\theta$ must intersect with infinitely many disks $D(a_{k_j},2)$ by $\mathcal{J}(g_\lambda)\subseteq\bigcup_{k=1}^{\infty}D(a_k,2)$. This means that the distance between $a_{k_j}$ and the ray $\arg z=\theta$ is at most $2$. By the simple geometric observation, it follows that
\[r_{k_j}\sin|\theta-\hat{\theta}_{k_j}|\leq 2.\]
Since $r_{k_j}\to \infty$ as $j\to\infty$, the above inequality implies $|\theta-\hat{\theta}_{k_j}|\to 0$ as $j\to\infty$, which
contradicts with $d(\theta)>0$. Thus, if $\theta\not\in\mathcal{E}$, then $\theta\not\in L(g_{\lambda})$, which means $L(g_{\lambda})\subseteq \mathcal{E}$.
This fact and $\mathcal{E}\subseteq L(g_{\lambda})$ leads to $L(g_{\lambda})=\mathcal{E}$.\vskip 2mm
\par For $\theta\not\in\mathcal{E}$, we consider the angle $\Omega(\alpha,\beta)=\{z: \arg z\in(\alpha,\beta)\}$ with \[\alpha=\theta-\frac{d(\theta)}{2},\quad \beta=\theta+\frac{d(\theta)}{2}.\]
From the geometric observation, it follows that for $z\in \Omega(\alpha,\beta)$
\[|z|\sin\frac{d(\theta)}{2}\leq |z-a_n|,\quad n\in\mathbb{N}.\]
There exists a constant $r_0>0$ such that $|z|\sin(d(\theta)/2)\geq 100$ when $|z|\geq r_0$. Taking these estimate into \eqref{construction-limit-function-2} yields out that
\[|g_\lambda(z)|\leq \lambda\sum_{k=1}^\infty 100^{-j}\leq 1,\quad\text{for}\,\,z\in\Omega(\alpha,\beta)\bigcap \{z: |z|>r_0\},  \]
which implies $\theta\not\in\mathcal{TD}(g_\lambda)$. This leads $\mathcal{TD}(g_\lambda)\subseteq \mathcal{E}$, so similarly $\mathcal{TD}(g_\lambda)=\mathcal{E}$.
\end{proof}
\noindent {\bf Proof of Theorem \ref{theorem-infinite-order}.} Clearly, the entire function with infinite lower order and the meromorphic function with order $\rho\in[0,\infty]$ are given by Proposition \ref{prop-entire-infinite-order} and Proposition \ref{prop-mermorphic-function} respectively.
\vskip 2mm
\begin{remark} Proposition \ref{prop-entire-infinite-order} and Proposition \ref{prop-mermorphic-function} also guarantee that there is one analogue of Theorem 1 for the set of all transcendental directions.
\end{remark}

 \section{The Example on $L(f)$ being the Union of Disjointed Intervals}
 \label{sect-finite-order-case-entire}
In this section, we will prove Theorem \ref{theorem-finite-order}, which is one try on the inverse problem of Julia limiting direction for  transcendental entire functions with finite order. The idea is similar as in the proof of Theorem \ref{theorem-infinite-order}, and we need find an entire function $f$ of order $\rho$ such that $L(f)$ is a closed interval. This is stated in Proposition \ref{prop-entire-finite-order} below.
\begin{prop}\label{prop-entire-finite-order}
For any given $\rho\in(1/2,\infty)$, both $r,\epsilon\in(0,1)$ and the closed interval $J\subseteq [0, 2\pi)$ with $meas(J)\geq \pi/\rho$, then there always exists  transcendental entire function $f$ satisfying
\begin{enumerate}\label{prop-finite-order}
\item $\rho(f)=\rho$ and  $\mathcal{TD}(f)=J$;\vskip 1mm
\item $f$ maps $D(0, 1)$ to $D(0, r)$;\vskip 1mm
\item for any $\theta\in [0, 2\pi)\backslash J$, there exists $\delta>0$ dependent on $\theta$ and $R>0$ dependent on $\theta,\varepsilon$ such that $|f(z)|< \epsilon$ for all $z\in\Omega(\theta-\delta,\theta+\delta)\cap\{z: |z|>R\}$.
\end{enumerate}
\end{prop}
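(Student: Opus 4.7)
The strategy parallels that of Lemma 4, with the Hayman--Fuchs function $E_0$ (which is of infinite order and grows in a strip) replaced by a rotated Mittag--Leffler function of order $\rho$ whose sector of exponential growth is a closed arc of length exactly $\pi/\rho$. For an interval $J$ longer than $\pi/\rho$, I would superpose finitely many such rotates.

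Set $\sigma := 1/\rho \in (0,2)$; the restriction $\sigma < 2$ is where the hypothesis $\rho > 1/2$ enters. For $E_\sigma(w) := \sum_{n\geq 0} w^n/\Gamma(1+\sigma n)$, the classical Wiman asymptotic expansion gives, uniformly in any closed subsector of $\{|\arg w| < \pi/(2\rho)\}$,
\[
E_\sigma(w) = \rho\, e^{w^\rho} + O(|w|^{-1}),
\]
and uniformly in any closed subsector of $\{\pi/(2\rho) < |\arg w| \leq \pi\}$,
\[
E_\sigma(w) = O(|w|^{-1}).
\]
Hence $E_\sigma$ is entire of order exactly $\rho$, grows like $\exp(r^\rho \cos(\rho\theta))$ on rays $\arg w = \theta$ with $|\theta| < \pi/(2\rho)$, and decays like $O(r^{-1})$ on rays with $|\theta| > \pi/(2\rho)$.

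Write $J \subseteq [0, 2\pi)$ as a closed arc of length $\ell \geq \pi/\rho$. I would choose midpoints $\theta^{(1)}, \ldots, \theta^{(k)} \in J$, with $\theta^{(1)}$ and $\theta^{(k)}$ at distance $\pi/(2\rho)$ from the two endpoints of $J$ and consecutive gaps strictly less than $\pi/\rho$, so that the open growth sectors $G_j := (\theta^{(j)} - \pi/(2\rho), \theta^{(j)} + \pi/(2\rho))$ lie inside $J$ and $\bigcup_j G_j$ equals the interior of $J$ (in the base case $\ell = \pi/\rho$ this is just $k=1$). Define
\[
f(z) := \lambda \sum_{j=1}^{k} E_\sigma\bigl(e^{-i\theta^{(j)}} z\bigr), \qquad \lambda := r/M,
\]
with $M := \max_{|z|\leq 1} \bigl|\sum_{j=1}^{k} E_\sigma(e^{-i\theta^{(j)}}z)\bigr|$. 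Then (2) is immediate. Each summand is entire of order $\rho$, so $\rho(f) \leq \rho$; on every ray in the interior of $J$ at least one summand has exponential growth $\exp(r^\rho c)$ with $c>0$ while the others are $O(1/r)$, so $\rho(f) = \rho$ and the interior of $J$ lies in $\mathcal{TD}(f)$, whence $J \subseteq \mathcal{TD}(f)$ by closedness of $\mathcal{TD}(f)$, proving (1). For any $\theta \notin J$ the distance from $\theta$ to each closed growth sector is positive, so on a small cone $\Omega(\theta - \delta, \theta + \delta)$ every summand satisfies $|E_\sigma(e^{-i\theta^{(j)}}z)| \leq A/|z|$ for some $A=A(\theta,\delta)$ and all sufficiently large $|z|$; this excludes $\theta$ from $\mathcal{TD}(f)$ (so $\mathcal{TD}(f) = J$), and taking $R > \lambda k A / \epsilon$ gives $|f(z)| < \epsilon$ on that cone, which is (3).

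The main technical obstacle is establishing the two Wiman-type asymptotic estimates uniformly up to but not across the Stokes lines $\arg w = \pm \pi/(2\rho)$; this is classical (see e.g.\ Gol'dberg--Ostrovskii, Chapter 1) but is the step that both forces $\rho > 1/2$ and delivers the clean growth/decay dichotomy driving the whole argument. A secondary point is the arrangement of the midpoints $\theta^{(j)}$ when $\ell > \pi/\rho$, which is elementary once one requires the closed growth sectors to lie in $J$ and their union of open parts to cover the interior of $J$.
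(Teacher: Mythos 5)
Your overall route is the same as the paper's: build $f$ as a finite sum of rotated Mittag--Leffler functions of order $\rho$, each giving exponential growth on a closed sector of aperture $\pi/\rho$ and decay $O(1/|z|)$ outside, with the rotations chosen so that the growth sectors lie in $J$ and cover it; properties (2) and (3), and the inclusion $\mathcal{TD}(f)\subseteq J$, are handled correctly. The gap is in your central claim that on every ray in the interior of $J$ ``at least one summand has exponential growth \ldots while the others are $O(1/r)$.'' Since you require consecutive gaps between the $\theta^{(j)}$ to be strictly less than $\pi/\rho$ (which is forced whenever $meas(J)$ is not an exact multiple of $\pi/\rho$, if the open sectors are to cover the interior of $J$), consecutive growth sectors overlap, and on rays inside an overlap \emph{two} summands are exponentially large; the claim is false there, and you have not ruled out cancellation between the two big terms. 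This is precisely the delicate point the paper's Case 2 is devoted to: it compares the indicator functions $u_1,u_2$ of the two overlapping summands, notes that one strictly dominates except at possibly one direction, and recovers the exceptional direction from the closedness of $\mathcal{TD}(f)$.

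The repair is short but must be stated. On a ray $\arg z=\beta$ in the overlap of the sectors centred at $\theta^{(j)}$ and $\theta^{(j+1)}$, the leading terms have moduli $\rho\exp\bigl(r^\rho\cos\rho(\beta-\theta^{(j)})\bigr)$ and $\rho\exp\bigl(r^\rho\cos\rho(\beta-\theta^{(j+1)})\bigr)$; off the bisector one strictly dominates and your conclusion holds, but on the bisector the two exponents are complex conjugates and the sum of the two leading terms equals $2\rho\exp\bigl(r^\rho\cos(\rho\Delta/2)\bigr)\cos\bigl(r^\rho\sin(\rho\Delta/2)\bigr)$ with $\Delta=\theta^{(j+1)}-\theta^{(j)}$, which vanishes along an unbounded sequence of radii. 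So you must either select radii on which $\bigl|\cos\bigl(r^\rho\sin(\rho\Delta/2)\bigr)\bigr|\geq 1/2$, or simply discard the finitely many bisector (and sector-boundary) directions and recover them by closedness of $\mathcal{TD}(f)$, as the paper does; you should also arrange the $\theta^{(j)}$ so that no three sectors overlap, or treat multi-term ties in the same way. With that supplement your argument is complete, and it has the mild advantage of avoiding the paper's case split according to whether $meas(J)\rho/\pi$ is an integer.
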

\par This time, for the convenience of readers, we first assume Proposition \ref{prop-finite-order} to be true and prove Theorem \ref{theorem-finite-order}, then give the proof of the proposition later.\vskip 2mm
\noindent{\bf Proof of Theorem \ref{theorem-finite-order}.}\,\,
If $m=1$, denote $J=I_1$, and we claim that $f$ constructed in Proposition \ref{prop-finite-order} is just what we want.
By Proposition \ref{prop-finite-order} and Theorem \ref{theorem-modification}, $\rho(f)=\rho$ and $J=\mathcal{TD}(f)\subseteq L(f)$.
For any $\theta\notin J$, by property (3) in Proposition \ref{prop-entire-finite-order},
there exists $\delta>0$ and $R>0$ such that for any $z\in \Omega(\theta-\delta,\theta+\delta)\cap\{z: |z|>R\}$, we have $f(z)\subseteq D(0, 1)$. This implies $z\in \mathcal{F}(f)$
by Montel's theorem, so $\theta\notin L(f)$, which means $L(f)\subseteq J$. Therefore, $L(f)=J$.\vskip 2mm
\par
If $m\geq 2$, for $k=1,2,\cdots,m$, we denote $\epsilon_k=r_k=2^{-(k+1)}$. By Proposition \ref{prop-finite-order}, there exists entire functions $f_k$ such that\vskip 1mm
\begin{enumerate}
\item $\rho(f_k)=\rho$ and  $\mathcal{TD}(f_k)=I_k$;\vskip 1mm
\item $f_k$ maps $D(0, 1)$ to $D(0, r_k)$;\vskip 1mm
\item For any $\theta\in [0, 2\pi)\backslash I_k$, there exists $\delta_k(\theta)>0$ and $R_k(\theta,\epsilon_k)>0$ such that $|f_k(z)|< \epsilon_k$ for all $z\in\Omega(\theta-\delta,\theta+\delta)\cap\{z: |z|>R\}$.
\end{enumerate}
Set $f=\sum_{k=1}^{m}f_k$, we know that $f$ maps $D(0,1)$ in $D(0,1-2^{-m})$. Consider property (1) and (3) of $f_k$, it is easy to see from Theorem \ref{theorem-modification} that
 \[\bigcup_{k=1}^{m}I_k=\bigcup_{k=1}^{m}\mathcal{TD}(f_k)=\mathcal{TD}(f)\subseteq L(f).\]
Similarly, for each $\theta\not\in \bigcup_{k=1}^{m}I_k$, again by property (3) of $f_k$, we have $|f(z)|\leq 1-2^{-m}$ for
all $z\in \Omega(\theta-\delta_0,\theta+\delta_0)\cap\{z: |z|>R_0\}$, where
\[\delta_0=\min\{\delta_k(\theta),k=1,2,\cdots,m\}\,\,\text{and}\,\,R_0=\max\{R_k(\theta,\epsilon_k),k=1,2,\cdots,m\}.\]
Clearly, $\Omega(\theta-\delta_0,\theta+\delta_0)\cap\{z: |z|>R_0\}$ belongs to $\mathcal{F}(f)$, so $\theta\not\in L(f)$, then
$\bigcup_{k=1}^{m}I_k=L(f)$ immediately. This completes the proof of Theorem \ref{theorem-finite-order}.\vskip 2mm
\par
Now, we would prove Proposition \ref{prop-finite-order} by using Mittag-Leffler function $E_{\alpha}$ given by
\[
E_\alpha(z)=\sum^\infty_{n=0} \frac{z^n}{\Gamma(\alpha^{-1}n+1)},\quad 0<\alpha<\infty.
\]
Moreover, it has the uniform asymptotic behavior \cite[(5.38) and (5.40')]{gol1},
\begin{equation}\label{Mittag-Lefller-asymptotic}
E_{\alpha}(z)=\left\{
\begin{array}{c}
\displaystyle \alpha\exp\left(z^{\alpha}\right) + O\left(|z|^{-1}\right), \,\quad\quad\quad |\arg (z)| \leq \frac{\pi}{2\alpha},\\
\displaystyle O\left(|z|^{-1}\right), \quad\quad\quad\qquad\qquad \frac{\pi}{2\alpha}< |\arg (z)| \leq \pi.
\end{array}
\right.
\end{equation}
Clearly, $\rho(E_{\alpha})=\alpha$ and $\mathcal{TD}(E_{\alpha})=[-\frac{\pi}{2\alpha},\frac{\pi}{2\alpha}]$.\vskip 2mm
\noindent {\bf Proof of Proposition \ref{prop-finite-order}.}\,\,Property (2) is just stated to show $D(0,1)\subseteq\mathcal{F}(f)$, but it is not essential. In fact, if there is an entire function $F$ satisfying property (1) and property (3), then we can take $f=\lambda F$
with $\lambda<rM(1,F)^{-1}$. This function $f$ satisfies all property (1)-(3). Thus, in the following discussion, we only need to construct the entire function with property (1) and property (2), which we call the $J$-properties for simplicity.\vskip 2mm
\par Set $l=meas(J)$. We will consider two cases according to $l\rho/\pi\in\mathbb{N}$ or not.\vskip 2mm
\par\noindent Case 1.\,\,We assume that $l\rho/\pi\in\mathbb{N}$, that is, $l=n\pi/\rho$ for some $n\in \mathbb{N}$. Without lose of generality, we assume that $J=[-\frac{\pi}{2\rho}, \frac{(2n-1)\pi}{2\rho}] (mod\, 2\pi)$. If not, we just consider a linear conjugation of $f$. For $n=1$, recall (\ref{Mittag-Lefller-asymptotic}), $E_{\rho}(z)$ is the suitable function.  For $n\geq 2$, we decompose $J$ into $n$
 intervals $J_1,J_2,\cdots,J_n$ of measure $\pi/\rho$ such that $J=\bigcup_{k=1}^{n}J_{k}$ and $int J_{i}\bigcap int J_{j}=\emptyset$ for distinct $i$ and $j$. And for each $1\leq k\leq n$, there exists $f_k(z)=\lambda_kE_{\rho}(\lambda_{k}^{-1}z)$ satisfying $J_k$-properties, where $|\lambda_k|=1$. Again by (\ref{Mittag-Lefller-asymptotic}), it is easy to see that $f(z)=\sum_{k=1}^{n} f_k(z)$ satisfies the desired $J$-properties.\vskip 2mm
\par\noindent Case 2.\,\,Now, $l\rho/\pi\notin \mathbb{N}$. Denote $L=[l\rho/\pi]$, where $[x]$ be the integer part of $x\geq 0$.
The condition $meas(J)\geq \pi/\rho$ means $1\leq L\leq [2\rho]$. Without loss of generality, we assume that $J=[-\frac{\pi}{2\rho}, l-\frac{\pi}{2\rho}]$, and decompose it as $J=\bigcup_{k=1}^{L+1}J_k$ where
\[J_k=\big[(2k-3)\frac{\pi}{2\rho},(2k-1)\frac{\pi}{2\rho}\big](k=1,2,\cdots,L),\quad J_{L+1}=\big[(2L-1)\frac{\pi}{2\rho},l-\frac{\pi}{2\rho}\big].\]
For every $J_k(k=1,2,\cdots,L-1)$, we take $f_k(z)=\lambda_kE_{\rho}(\lambda_{k}^{-1}z)$ satisfying $J_k$-properties as in Case 1.
Next, we would construct one entire function $f_L$ satisfying $\tilde{J}=J_{L}\cup J_{L+1}$ properties, and $\tilde{l}=meas(\tilde{J})\in(\pi/\rho,2\pi/\rho)$. \vskip 2mm
 \par We simplify $\tilde{J}$ to $\tilde{J}=[-\frac{\pi}{2\rho},\tilde{l}-\frac{\pi}{2\rho}]$ by one linear conjugation of $f_L$. Set $\theta_0=l-\frac{\pi}{\rho}$,
 and denote \[u_1(x)=\chi_{(-\frac{\pi }{2\rho}, \frac{\pi }{2\rho})}(x)\cos\rho x, \quad u_2(x)=u_1(x+\theta_0),\]
where $\chi_{(-\frac{\pi }{2\rho}, \frac{\pi }{2\rho})}(x)$ is the characteristic function of $(-\frac{\pi }{2\rho}, \frac{\pi }{2\rho})$. By \eqref{Mittag-Lefller-asymptotic}, we have
\begin{equation}\label{Mittag-Leffler-growth}
\lim_{r\rightarrow\infty}\frac{\log^{+} |E_{\alpha}(re^{i\beta})|}{ r^{\rho}}=u_1(\beta),\quad \text{for}\,\,\beta\in \mathbb{R}.
\end{equation}
It is not difficult to deduce that
if $u_1(x)\neq 0$ or $u_2(x)\neq 0$, then $u_1(x)-u_2(x)\neq 0$
with at most one exceptional point. This fact, \eqref{Mittag-Lefller-asymptotic} and \eqref{Mittag-Leffler-growth} imply that $f_L(z)=E_{\rho}(z)+e^{-i\theta_0}E_{\rho}(e^{i\theta_0}z)$ grows as
\[|f_L(z)|=\rho\exp\big\{\max\{u_1(\beta),u_2(\beta)\}
|z|^{\rho}\big\}(1+o(1))\]
in the angular domain $\Omega(-\frac{\pi}{2\rho}, l-\frac{\pi}{2\rho})$ with at most one exceptional direction. At the same time,
$|f_L(z)|=O(|z|^{-1})$ outside the above angle. Since $\mathcal{TD}(f_L)$ is closed, then the above argument means that $f_L(z)$
is the entire function satisfying $\tilde{J}$ properties. Therefore, $f(z)=\sum_{k=1}^{L} f_k(z)$ is the function with the desired property.

\section{Entire Function with Isolated Julia Limiting Direction}\label{sect-isolated-Julia-limiting-direction}
To prove Theorem \ref{theorem-example}, we just need to show that the entire function
\begin{equation}\label{example-function}f(z)=z-\frac{1-\exp(-z)}{z(z^2+4\pi^2)}
\end{equation}
has one isolated Julia limiting direction.
It is easy to see that all fixed points of $f$ are $z=2k\pi i\,(k\in\mathbb{Z}\setminus\{-1,0,1\})$, $f(\mathbb{R})\subseteq\mathbb{R}$, $f(x)<x$ for all $x\in \mathbb{R}$. Since $f$ has no finite fixed point in $\mathbb{R}$, $f^n(x)$ must diverge to $-\infty$ as $n$ goes to infinity. Furthermore, we have
\begin{equation} \label{asymptotic-f}
\lim_{n\rightarrow\infty}|f^{n+1}(x)|\exp\left\{-\frac{|f^n(x)|}{2}\right\}=\infty.
\end{equation}
\par For any $L>0$,  we denote \[U_{L}=\{z: Re z\geq L, Im z\geq L\}, \quad V_{L}=\{z: Re z\geq L, Im z\leq -L\}.\]
Given any $R\geq L$, we define two curves
\[\gamma_{1, R}(t)=(R+t)+R\sqrt{-1},\quad \gamma_{2, R}(t)=R+(R+t)\sqrt{-1}\] for all $t\geq 0$. It is obvious that
\begin{equation}
U_{L}=\bigcup_{R\geq L}\big(\gamma_{1, R}\cup \gamma_{2, R}\big).
\end{equation}
\par Next, we first state one technical lemma, which is useful to prove Theorem \ref{theorem-example}. The proof of this lemma, containing some long computation, will be shown in the end of this section.
\begin{lemma}\label{lemma-baker-domain-estimation}
Under the setting above, there exists a positive constant $L_0$ such that if $L\geq L_0$, we have
\begin{enumerate}
\item $Re f(\gamma_{1,R}(t))\geq R+\frac{1}{50} R^{-3}$, while $Im f(\gamma_{1,R}(t))\geq R+\frac{1}{10} R^{-3}$ for $t\leq \frac{R}{2}$ and $Im f(\gamma_{1,R}(t))\geq R+\frac{1}{5}t^{-3}$  for $t\geq \frac{R}{2}$;\vskip 1mm
\item $Im f(\gamma_{2,R}(t))\geq R+\frac{1}{50} R^{-3}$, while $Re f(\gamma_{2,R}(t))\geq R+\frac{1}{10} R^{-3}$ for $t\leq \frac{R}{2}$ and $Re f(\gamma_{1,R}(t))\geq R+\frac{1}{5}t^{-3}$  for $t\geq \frac{R}{2}$.
\end{enumerate}
\end{lemma}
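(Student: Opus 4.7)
The plan is to prove the lemma by a direct asymptotic expansion of $f$ along the two curves, followed by an explicit bound on real and imaginary parts. The starting observation is that on any set where $\mathrm{Re}(z) \geq L_0$ with $L_0$ large, the term $e^{-z}$ has modulus at most $e^{-L_0}$, and $z(z^2+4\pi^2)$ differs from $z^3$ multiplicatively by $1 + O(|z|^{-2})$. Consequently I would write
\[
f(z) = z - \frac{1}{z^3} + E(z), \qquad |E(z)| = O(|z|^{-5}) + O\bigl(e^{-\mathrm{Re}(z)}\bigr),
\]
and this error is $o(R^{-3})$ uniformly on $\gamma_{1,R}$ and $\gamma_{2,R}$ once $L_0$ is sufficiently large. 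After this reduction, the lemma becomes a statement about explicit polynomial inequalities in $(R,t)$.

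The second step is to use the identities
\[
\mathrm{Re}\Bigl(-\tfrac{1}{z^3}\Bigr) = \frac{x(3y^2 - x^2)}{(x^2+y^2)^3}, \qquad \mathrm{Im}\Bigl(-\tfrac{1}{z^3}\Bigr) = \frac{y(3x^2 - y^2)}{(x^2+y^2)^3}
\]
for $z = x+iy$, and then substitute the parameterisation of each curve. On $\gamma_{1,R}(t)$, where $x = R+t$ and $y = R$, the key structural facts are that $3x^2 - y^2 = 2R^2+6Rt+3t^2$ is strictly positive for all $t \geq 0$, so the imaginary-part correction to $y = R$ is always positive, while the real-part correction is positive for $t \leq (\sqrt{3}-1)R$ and negative afterwards. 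In the negative regime the reserve $\mathrm{Re}(z) = R + t$ comfortably absorbs the correction, whose modulus is of order $(R+t)^{-3}$. The case of $\gamma_{2,R}$ follows by interchanging the roles of $x$ and $y$ (the paper is symmetric in that regard, and the exponentially small error $E(z)$ does not spoil the symmetry up to negligible terms).

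The third step, which I expect to be the main obstacle, is the quantitative minimisation. Setting $s = t/R$ converts each desired inequality into a statement about an explicit one-variable rational function of $s$, to be verified on the intervals $[0,1/2]$ and $[1/2,\infty)$. The hardest case is $\mathrm{Im}\, f(\gamma_{1,R}(t)) \geq R + \frac{1}{5}t^{-3}$ for $t \geq R/2$, because the correction behaves like $R(R+t)^{-4}$ in the regime $t \gg R$ and one must verify carefully that this dominates $\frac{1}{5}t^{-3}$ with the chosen constants; a further subdivision of the range $t \geq R/2$ may be needed, with a separate argument for the asymptotic regime $t \gg R$ where the inequality is forced by the sign of the correction rather than by its magnitude. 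Once $L_0$ is fixed so that the error $E(z)$ fits inside the slack of the constants $\frac{1}{50}$, $\frac{1}{10}$, $\frac{1}{5}$, the lemma follows, and the analogous estimates for $\gamma_{2,R}$ are obtained by the same computation with $x$ and $y$ swapped.
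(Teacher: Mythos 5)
Your outline follows the same route as the paper's proof: the paper writes $f=g_1+g_2+g_3$ with $g_1(z)=z-z^{-3}$, $g_2(z)=e^{-z}/z^3$, $g_3(z)=\frac{1-e^{-z}}{z^3}\cdot\frac{4\pi^2}{z^2+4\pi^2}$, uses the same explicit formulas for $\mathrm{Re}$ and $\mathrm{Im}$ of $-z^{-3}$, and splits at $t=R/2$; your steps 1--2 are exactly that. The problem is the step you defer to the end. Your fallback for the regime $t\gg R$ (``the inequality is forced by the sign of the correction rather than by its magnitude'') cannot work: positivity of the correction only gives $\mathrm{Im}\,f\geq R$ up to the error terms, never $\geq R+\tfrac{1}{5}t^{-3}$. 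And your suspicion about this case is in fact correct: on $\gamma_{1,R}$ the imaginary-part correction from $g_1$ equals $\frac{R(2R^2+6Rt+3t^2)}{((R+t)^2+R^2)^3}$, which at $t=R/2$ is about $0.17R^{-3}$ while $\tfrac{1}{5}t^{-3}=1.6R^{-3}$, and which for $t\gg R$ behaves like $3Rt^{-4}$, smaller than $\tfrac{1}{5}t^{-3}$ as soon as $t>15R$. So the bound claimed in the lemma for $t\geq R/2$ is simply not true; the best one can extract from this computation is something like $\mathrm{Im}\,f(\gamma_{1,R}(t))\geq R+\frac{R}{48(R+t)^4}$ for $L_0$ large. (The paper's own proof conceals this behind the inequality $\frac{3Rt^2}{8(R+t)^6}\geq\frac{1}{4t^3}$, which is false: at $t=R/2$ its left side is about $\frac{1}{120}R^{-3}$ and its right side is $2R^{-3}$.) So your step 3 is a genuine gap, and it is not fillable for the statement as given.

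There is a second defect, which your symmetry remark (``the exponentially small error does not spoil the symmetry'') glosses over, and which is more serious. Along $\gamma_{2,R}$ one has $\mathrm{Re}\,z\equiv R$, so the exponential term does not decay along the curve, whereas the available slack does: writing $z=R+iy$ with $y=R+t$, the expansion gives $\mathrm{Re}\,f(z)=R+\frac{3R(1-e^{-R}\cos y)}{y^4}+\frac{e^{-R}\sin y}{y^3}+O(y^{-5})$, so at heights $y>4Re^{R}$ with $\sin y$ close to $-1$ one gets $\mathrm{Re}\,f(\gamma_{2,R}(t))<R$. Hence not only the constant $\tfrac{1}{5}t^{-3}$ but even the weak form of item (2) fails for large $t$; your blanket error bound $O(e^{-\mathrm{Re}\,z})$ being ``$o(R^{-3})$ uniformly'' is not enough precisely because in this range the slack is only of order $R(R+t)^{-4}$, and here the oscillating term genuinely wins. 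Consequently the lemma needs to be restated (and the invariance $f(U_L)\subset U_L$ used in the proof of Theorem \ref{theorem-example} requires a modified region or a different argument near the vertical edge), rather than a sharper bookkeeping of the constants $\tfrac{1}{50},\tfrac{1}{10},\tfrac{1}{5}$. In short: your approach is the paper's approach, your doubts locate exactly the weak points, but the proposal as written does not prove the statement, and no completion of step 3 can.
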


\par In addition, we still need one Baker's result to guarantee that $U_L$ and $V_L$ are contained in two different Fatou components.

\begin{lemma}[\cite{bak2}]\label{lemma-baker-domain}
Let $f$ be a transcendental entire function. Assume $U$ be a Baker domain, then for any compact set $K\subseteq U$, there exists $c=c(K)$ and $N\in \mathbb{N}$ such that $|f^n(z_1)|\leq |f^n(z_2)|^{c}$ for any $z_1, z_2 \in K$ and $n\geq N$.
\end{lemma}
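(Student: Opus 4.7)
The plan is to reduce each of the four inequalities in the lemma to an explicit polynomial estimate in $R$ and $t$ by exploiting the specific form of $f$. Writing $f(z)=z-g(z)$ with $g(z)=(1-e^{-z})/(z(z^2+4\pi^2))$, I first observe that on both curves $\mathrm{Re}\, z\ge R$ and $\mathrm{Im}\, z\ge R$, so $|e^{-z}|\le e^{-R}$, which is negligible once $L_0$ is chosen so large that $e^{-L_0}$ is dominated by $L_0^{-k}$ for every fixed $k$. Thus we may replace $g$ by the rational function $\tilde g(z):=1/(z(z^2+4\pi^2))$ and absorb the exponentially small difference into the error term.

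Writing $z=x+iy$, a direct expansion gives $z(z^2+4\pi^2)=P+iQ$ where
\begin{equation*}
P=x(x^2-3y^2+4\pi^2),\qquad Q=y(3x^2-y^2+4\pi^2),
\end{equation*}
so that $\tilde g(z)=(P-iQ)/(P^2+Q^2)$ and hence
\begin{equation*}
\mathrm{Re}\, f(z)=x-\frac{P}{P^2+Q^2}+O(e^{-R}),\qquad \mathrm{Im}\, f(z)=y+\frac{Q}{P^2+Q^2}+O(e^{-R}).
\end{equation*}

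For part~(1), I substitute $(x,y)=(R+t,R)$. The real-part claim reduces to $t-P/(P^2+Q^2)\ge 1/(50R^3)$. I split at the zero $t_0=(\sqrt{3}-1)R$ of $P$: on $[0,t_0]$ we have $P\le 0$, so $-P/(P^2+Q^2)\ge 0$, and the bound follows from $|P|$ being of order $R^3$ together with $P^2+Q^2$ of order $R^6$ in this range; on $[t_0,\infty)$ the linear term $t\ge (\sqrt{3}-1)R$ already dominates $|P|/(P^2+Q^2)=O(R^{-3})$, so the claim is immediate for $R$ large. For the imaginary-part claim on $[0,R/2]$, I parametrize $s=t/R$ and express $R^3 Q/(P^2+Q^2)$ as a smooth function of $s$ plus $O(R^{-2})$; a short calculation shows this function is monotone decreasing on $[0,1/2]$ and attains its minimum $368/2197>1/10$ at $s=1/2$, which yields the desired lower bound. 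For the remaining estimate on $[R/2,\infty)$, the claim $Q/(P^2+Q^2)\ge 1/(5t^3)$ unpacks, after clearing denominators, to a polynomial inequality in $R$ and $t$ which I verify in the two sub-ranges $R/2\le t\le CR$ and $t\ge CR$ for a suitable absolute constant $C$.

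Part~(2) follows by an entirely symmetric computation: for $z=\gamma_{2,R}(t)=R+i(R+t)$ we take $(x,y)=(R,R+t)$, which swaps the sizes of $x$ and $y$ in the formulas for $P$ and $Q$, so the claims about $\mathrm{Im}\, f$ and $\mathrm{Re}\, f$ on $\gamma_{2,R}$ translate into those about $\mathrm{Re}\, f$ and $\mathrm{Im}\, f$ on $\gamma_{1,R}$ respectively, and the same case analysis applies. The main obstacle is the $1/(5t^{3})$ estimate for large $t$: the ratio $Q/(P^2+Q^2)$ transitions from size of order $R^{-3}$ near $t=R/2$ to much smaller values for $t\gg R$, so keeping track of the constants carefully enough to match $1/(5t^3)$ throughout the relevant range, and choosing the correct separating constant $C$ between the two sub-ranges, is the delicate step.
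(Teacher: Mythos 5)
Your proposal does not address the statement it is attached to. The lemma in question is Baker's result from \cite{bak2}: for a Baker domain $U$ of a transcendental entire function and any compact $K\subseteq U$, there exist $c=c(K)$ and $N$ with $|f^n(z_1)|\le |f^n(z_2)|^{c}$ for all $z_1,z_2\in K$ and $n\ge N$. This is a general statement about the iterates $f^n$ on an invariant Fatou component; it contains no curves $\gamma_{i,R}$, no parameters $R$ and $t$, and no specific formula for $f$. What you have written is instead a proof sketch of the companion technical lemma in the same section, namely the lower bounds for $\mathrm{Re}\,f(\gamma_{i,R}(t))$ and $\mathrm{Im}\,f(\gamma_{i,R}(t))$ for the particular function $f(z)=z-(1-e^{-z})/(z(z^2+4\pi^2))$. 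However carefully those polynomial estimates in $R$ and $t$ are carried out, they establish nothing about the inequality $|f^n(z_1)|\le |f^n(z_2)|^{c}$, so as a proof of the stated lemma the proposal is a complete miss rather than an alternative route. (Note also that the paper does not reprove this lemma at all; it simply cites Baker.)

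For the record, the actual argument runs along entirely different lines and uses none of your ingredients. Enlarging $K$ if necessary, join $z_1$ to $z_2$ by a path in $U$ of hyperbolic length at most $d=d(K)$; by the Schwarz--Pick lemma, $f^n$ does not increase the hyperbolic metric of the invariant domain $U$, so $f^n(z_1)$ and $f^n(z_2)$ are joined in $U$ by a path of hyperbolic length at most $d$. Since $U$ omits at least two finite points (indeed all of $\mathcal{J}(f)$), its hyperbolic density at a point $w$ with $|w|$ large is bounded below by $c_0/\bigl(|w|\log|w|\bigr)$; because $f^n\to\infty$ locally uniformly in the Baker domain, for $n\ge N$ the image path lies where this estimate applies, and integrating it gives
\[
\bigl|\log\log|f^n(z_1)|-\log\log|f^n(z_2)|\bigr|\le d/c_0 ,
\]
whence $|f^n(z_1)|\le |f^n(z_2)|^{c}$ with $c=e^{d/c_0}$. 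The hyperbolic metric, the Schwarz--Pick inequality, and the convergence $f^n\to\infty$ are the essential tools here, and none of them appears in your write-up.
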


\par\noindent{\bf Proof of Theorem \ref{theorem-example}.}\,\,From Lemma \ref{lemma-baker-domain-estimation}, we directly know
that $f(U_{L})\subset (U_{L})$ and $f(V_{L})\subset (V_{L})$. This implies that the Fatou component containing $U_L$ and that containing $V_L$ must be invariant Fatou component, and can not be attraction Basin, parabolic Basin and Siegel disk.
By the theorem on classifications of Fatou components (\cite{ber1} and \cite[Theorem 2.1]{sch}), the only possibility is Baker domain.\vskip 2mm
 \par Now, we exclude the case that $U_{L}$ and $V_{L}$ are contained in the same Baker domain $U$. Otherwise, there must be some compact interval $J\subseteq \mathbb{R}$ which belongs to this Baker domain. For any $x\in J$, $f(x)\in \mathbb{R}\cap U$. Let $K=\{x, f(x)\}$, it is compact subset of $U$. Applying Lemma \ref{lemma-baker-domain} to $z_1=x,z_2=f(x)$, we have $|f^{n+1}(x)|\leq |f^{n}(x)|^{c}$ for some positive $c$, which contradicts with (\ref{asymptotic-f}) since $\lim\limits_{n\rightarrow\infty}f^n(x)=-\infty$. Thus, $U_L$ and $V_L$ must be contained in the different Baker domains, and the ray $\arg z=0$ must be the Julia limiting direction.\vskip 2mm
\par Since both $U_L$ and $V_L$ belong to $\mathcal{F}(f)$, we know that
$(0, \frac{\pi}{2})\bigcup (-\frac{\pi}{2}, 0) (mod \ 2\pi)$ is in the complement of $L(f)$. On the other hand, by Theorem \ref{theorem-modification}, clearly
$[\frac{\pi}{2}, \frac{3\pi}{2}]\subseteq \mathcal{TD}(f)\subseteq L(f)$. Thus, the above argument leads
\[L(f)=\big[\frac{\pi}{2}, \frac{3\pi}{2}\big]\cup \{0\}.\]
\par The proof of Theorem \ref{theorem-example} is therefore complete if we can establish Lemma \ref{lemma-baker-domain-estimation}.
\vskip 2mm
\noindent{\bf Proof of Lemma \ref{lemma-baker-domain-estimation}.} We write $f(z)=g_1(z)+g_2(z)+g_3(z)$,  where \[ g_1(z)=z-\frac{1}{z^3},\,\, g_{2}(z)=\frac{\exp(-z)}{z^3},\,\,g_3(z)=\frac{1-\exp(-z)}{z^3}\frac{4\pi^2}{z^2+4\pi^2}.\] For $R\geq L$, we always have
\begin{equation}\label{estimation-g-2}
|g_2(\gamma_{1, R}(t)|\leq R^{-3}\,e^{-(R+t)},\quad |g_2(\gamma_{2, R}(t)|\leq R^{-3}\,e^{-R}.
\end{equation}
By using the above estimate, there exists $L_0$ such that $L\geq L_0$, we have
\[
|g_3(\gamma_{i, R}(t)|\leq 8\pi^2((R+t)^2+R^2)^{-\frac{5}{2}},\quad (i=1,2)
\]
which implies
\begin{equation}\label{estimation-g-3}
\begin{split}
|g_3(\gamma_{i,R}(t))|&\leq 25 R^{-5},\qquad\quad\quad\text{for}\,\,t\in[0,R/2];\\
|g_3(\gamma_{i,R}(t))|&\leq 98\,t^{-5},\qquad\quad\quad\text{for}\,\, t\in[R/2,+\infty).
\end{split}
\end{equation}
On the other hand, denote $z=x+\sqrt{-1}\,y$, the straightforward calculation yields
\[
g_1(z)=x+\frac{3xy^2-x^3}{(x^2+y^2)^3}+\sqrt{-1}\Big(\frac{3x^2y-y^3}{(x^2+y^2)^3}+y\Big).
\]
We can deduce that for $t\in [0, R/2]$,
\begin{equation}\label{estimates-1}
\begin{split}
Re g_1(\gamma_{1,R}(t))-R & \geq \frac{(R+t)(3R^2-9R^2/4)}{(9R^2/4+R^2)^{3}}
\geq \frac{1}{46R^3}, \\
Im g_1(\gamma_{1,R}(t))-R & \geq \frac{R(27R^2/4-R^2)}{(9R^2/4+R^2)^3}\geq
\frac{1}{6R^3},
\end{split}
\end{equation}
while for $t\in [R/2,\infty)$,
\begin{equation}\label{estimates-2}
\begin{split}
Re g_1(\gamma_{1,R}(t))-R&\geq \frac{R}{2}-\frac{(R+t)^3}{(R+t)^{6}}\geq \frac{R}{2}-\frac{16}{81R^4}\geq \frac{R}{3}, \\
Im g_1(\gamma_{1,R}(t))-R & \geq \frac{3Rt^2}{8(R+t)^6}\geq \frac{1}{4 t^3}.
\end{split}
\end{equation}
Similarly, we can also obtain the estimation of $Re g_1(\gamma_{2,R}(t))$ and $Im g_1(\gamma_{2,R}(t))$ as
\begin{equation}\label{estimates-3}
\begin{split}
&Re g_1(\gamma_{2,R}(t))-R\geq \frac{1}{6}R^{-3},\quad Im g_1(\gamma_{2,R}(t))-R\geq \frac{1}{46}R^{-3},\quad \text{for}\,\,t\in[0,\frac{R}{2}];\\
&Re g_1(\gamma_{2,R}(t))-R\geq \frac{1}{4}t^{-3},\quad Im g_1(\gamma_{2,R}(t))-R\geq \frac{1}{3}R,\quad\quad \text{for}\,\,t\in[\frac{R}{2},\infty].
\end{split}
\end{equation}
Combining these estimates \eqref{estimation-g-2}, \eqref{estimation-g-3}, \eqref{estimates-1}, \eqref{estimates-2} and \eqref{estimates-3} together, this completes the proof of Lemma \ref{lemma-baker-domain-estimation}.

 \vskip 4mm

\noindent{\bf Acknowledgement}: The authors would like to thank Prof. Walter Bergweiler's great help and Prof. Jianyong Qiao's encouragement during the preparation for this paper. We also thank Prof. Jianhua Zheng for his comments on the early draft of this paper. And this work was supported by the National Natural Science Foundation of China (Grant No.11771090, No.11571049) and Natural Sciences Foundation of Shanghai (Grant No.17ZR1402900).

\end{document}